%% LyX 2.3.6.2 created this file.  For more info, see http://www.lyx.org/.
%% Do not edit unless you really know what you are doing.
\documentclass[english]{article}
\usepackage[T1]{fontenc}
\usepackage[latin9]{inputenc}
\usepackage{geometry}
\geometry{verbose,lmargin=1.25in,rmargin=1.25in}
\usepackage{amsmath}
\usepackage{amsthm}
\usepackage{amssymb}
\usepackage{stmaryrd}
\usepackage{graphicx}

\makeatletter

%%%%%%%%%%%%%%%%%%%%%%%%%%%%%% LyX specific LaTeX commands.
%% Because html converters don't know tabularnewline
\providecommand{\tabularnewline}{\\}

%%%%%%%%%%%%%%%%%%%%%%%%%%%%%% Textclass specific LaTeX commands.
\numberwithin{equation}{section}
\numberwithin{figure}{section}
\theoremstyle{plain}
\newtheorem{thm}{\protect\theoremname}
\theoremstyle{plain}
\newtheorem{lem}[thm]{\protect\lemmaname}
\theoremstyle{plain}
\newtheorem{prop}[thm]{\protect\propositionname}
\theoremstyle{remark}
\newtheorem{rem}[thm]{\protect\remarkname}
\theoremstyle{definition}
\newtheorem{defn}[thm]{\protect\definitionname}
\theoremstyle{plain}
\newtheorem{cor}[thm]{\protect\corollaryname}

%%%%%%%%%%%%%%%%%%%%%%%%%%%%%% User specified LaTeX commands.
\setlength{\parskip}{\medskipamount}

\makeatother

\usepackage{babel}
\providecommand{\corollaryname}{Corollary}
\providecommand{\definitionname}{Definition}
\providecommand{\lemmaname}{Lemma}
\providecommand{\propositionname}{Proposition}
\providecommand{\remarkname}{Remark}
\providecommand{\theoremname}{Theorem}

\begin{document}
\global\long\def\ve{\varepsilon}%
\global\long\def\R{\mathbb{R}}%
\global\long\def\Rn{\mathbb{R}^{n}}%
\global\long\def\Rd{\mathbb{R}^{d}}%
\global\long\def\E{\mathbb{E}}%
\global\long\def\P{\mathbb{P}}%
\global\long\def\bx{\mathbf{x}}%
\global\long\def\vp{\varphi}%
\global\long\def\ra{\rightarrow}%
\global\long\def\smooth{C^{\infty}}%
\global\long\def\Tr{\mathrm{Tr}}%
\global\long\def\bra#1{\left\langle #1\right|}%
\global\long\def\ket#1{\left|#1\right\rangle }%
\global\long\def\Re{\mathrm{Re}}%
\global\long\def\Im{\mathrm{Im}}%
\global\long\def\bsig{\boldsymbol{\sigma}}%
\global\long\def\btau{\boldsymbol{\tau}}%
\global\long\def\bmu{\boldsymbol{\mu}}%
\global\long\def\bx{\boldsymbol{x}}%
\global\long\def\bups{\boldsymbol{\upsilon}}%
\global\long\def\bSig{\boldsymbol{\Sigma}}%
\global\long\def\bt{\boldsymbol{t}}%
\global\long\def\bs{\boldsymbol{s}}%
\global\long\def\by{\boldsymbol{y}}%
\global\long\def\brho{\boldsymbol{\rho}}%
\global\long\def\ba{\boldsymbol{a}}%
\global\long\def\bb{\boldsymbol{b}}%
\global\long\def\bz{\boldsymbol{z}}%
\global\long\def\bc{\boldsymbol{c}}%
\global\long\def\balpha{\boldsymbol{\alpha}}%
\global\long\def\bbeta{\boldsymbol{\beta}}%
\global\long\def\bu{\boldsymbol{u}}%
\global\long\def\bv{\boldsymbol{v}}%

\title{{\Large{}Multiscale interpolative construction of quantized tensor
trains}}
\author{Michael Lindsey\\
{\small{}UC Berkeley}{\footnotesize{}}\\
{\small{}$\texttt{lindsey@berkeley.edu}$}}
\maketitle
\begin{abstract}
Quantized tensor trains (QTTs) have recently emerged as a framework
for the numerical discretization of continuous functions, with the
potential for widespread applications in numerical analysis. However,
the theory of QTT approximation is not fully understood. In this work,
we advance this theory from the point of view of multiscale polynomial
interpolation. This perspective clarifies why QTT ranks decay with
increasing depth, quantitatively controls QTT rank in terms of smoothness
of the target function, and explains why certain functions with sharp
features and poor quantitative smoothness can still be well approximated
by QTTs. The perspective also motivates new practical and efficient
algorithms for the construction of QTTs from function evaluations
on multiresolution grids.
\end{abstract}

\section{Introduction}

Quantized tensor trains (QTTs) \cite{QTT} have been proposed as a
tool for the discretization of functions of one or several continuous
variables. QTTs offer an unconventional point of view compared to
classical frameworks such as ordinary grid-based discretization and
basis expansion. Indeed, the QTT format is motivated by identifying
functions of a continuous variable with tensors via binary decimal
expansion of the argument, then leveraging the tensor network format
known as the tensor train (TT) \cite{OseledetsTyrtyshnikov2009,oseledets2011tensortrain}
or matrix product state (MPS) \cite{Fannes_MPS,Klumper_MPS,White1992,PerezGarcia_MPS}.

Although the QTT format was forwarded over a decade ago in 2011 \cite{QTT},
it has seen a recent surge of interest, motivated by applications
to fluid mechanics \cite{QTT_fluids}, plasma physics \cite{ye2022,ye2023quantized},
quantum many-body physics \cite{Shinaoka2023qtt}, quantum chemistry
\cite{KhoromskaiaKhoromskijSchneider2011,jolly2023tensorized}, and
Fokker-Planck equations \cite{doi:10.1137/120864210}. Much of the
promise of QTTs derives from the fact that certain operations such
as convolution \cite{Kazeev2013} and the discrete Fourier transform
\cite{superfastFFT} are known to be efficient in the QTT format.
In recent work \cite{ChenStoudenmireWhite2023}, the discrete Fourier
transform was in fact demonstrated to have low rank as a matrix product
operator (MPO).

However, current understanding of which functions can be approximated
with QTTs is incomplete. Existing analysis often proceeds by representing
a function as a sum of building blocks \cite{doi:10.1137/120864210},
such as complex exponentials, known to have low rank. From this point
of view, a picture emerges in which smoothness controls the QTT rank,
since, e.g., a function with rapidly decaying Fourier coefficients
can be written as a sum of only a few complex exponentials. Similar
analysis has been pursued based on the replacement of a function with
a polynomial interpolation or approximation \cite{ShiTownsend2021}.

However, this type of analysis cannot fully explain the approximation
power of QTTs. First, it cannot explain why the ranks tend to decay
with increasing depth in the QTT, since rank bounds derived from summation
of complex exponentials or polynomial approximation are uniform across
all depths. Relatedly, we will see that even the global rank bounds
derived from such approaches are suboptimal. Second, it cannot explain
why certain functions with sharp features (i.e., poor quantitative
smoothness) often admit low-rank representations as QTT. We comment
that some recent work \cite{MazenNouy1,MazenNouy2} has studied quantized
tensor networks through the lens of Besov spaces, though the goals
and focus of that work are quite different from our own.

In this work, we analyze the construction of QTTs from the point of
view of \emph{multiscale} polynomial interpolation. Our analysis addresses
the open questions highlighted above.

First, we show quantitative rank bounds in terms of the smoothness
of the target function which decay with depth. In fact, our decaying
rank bounds imply that $\Omega$-bandlimited functions have QTT ranks
uniformly bounded by about $\sqrt{\Omega}$. (See Corollary \ref{cor:sqrt}
below for a precise statement.) This result is particularly striking
because, by expressing an $\Omega$-bandlimited function as a linear
combination of $O(\Omega)$ complex exponentials, we naively expect
a uniform rank bound of only $O(\Omega)$.

Second, we reach the stylized conclusion that functions which are
well approximated in a multiresolution polynomial basis are well represented
as QTTs. This conclusion is formalized in Theorem \ref{thm:multires}.

In addition to theoretical insight, the interpolative perspective
motivates practical rank-revealing algorithms for the construction
of QTTs from function evaluations on multiresolution grids. Existing
approaches based on Fourier truncation and separability assumptions
\cite{Oseledets2013constructiverep} may suffer from costs greater
than is necessitated by the true underlying rank of the QTT and moreover
cannot be extended to account for multiresolution structure. In fact,
even for the construction of exact polynomials as QTTs, to our knowledge
our approach defines the first numerically stable method based on
function evaluations, since the known exact construction \cite{Oseledets2013constructiverep}
relies on the expansion of a polynomial in a monomial basis, which
may involve extremely large coefficients for high-order polynomials.

Meanwhile, compared to tensor cross interpolation (TCI) \cite{oseledets2010ttcross},
the most celebrated generic algorithm for the construction of TTs
from black-box evaluations, our approach can yield significant efficiency
gains in terms of the number of function evaluations, and it performs
robustly even when TCI fails to converge. Meanwhile, all function
evaluations in our scheme are embarrassingly parallel, by contrast
to the evaluations in TCI. (We comment, however, that the applicability
of TCI to the construction of TTs that are not QTTs is much wider,
and our approaches rely on the specific structure of the QTT setting.)

Finally, we comment that our presentation makes it clear how to pass
back and forth between the representation of a function as a QTT and
its evaluation on a multiresolution grid of interpolating points.
This perspective opens the door to hybrid algorithms that combine
the QTT format with a format based on evaluations on a multiresolution
interpolating grid, similar to a discontinuous Galerkin or multiwavelet
representation. Certain operations such as convolution or Fourier
transformation may be convenient in the QTT format, while others such
as pointwise function composition may be more convenient in the multiresolution
interpolating grid format. Moreover, passing back and forth between
these formats using fast linear algebra (exploiting the sparse and,
in the multivariate case, Kronecker-factorized structure of our tensor
cores) may ultimately be more efficient than fully operating within
the QTT format, where cubic costs of key algorithms in the QTT ranks
may become prohibitively large. We highlight the practical investigation
of this point of view as a topic for further work.

\subsection{Outline}

In Section \ref{sec:Preliminaries}, we review the idea of QTTs. In
Section \ref{sec:decaying}, we prove rank bounds for QTTs that decay
with depth. In Section \ref{sec:interpolative}, we present our practical
approach for QTT construction, first in its most basic form but then
with several extensions that improve the practical efficiency. (See
the beginning of Section \ref{sec:interpolative} for a detailed outline
of each subsection.) In Section \ref{sec:invert}, we explain how
to `invert' the QTT construction, recovering function evaluations
on interpolating grids from a given QTT. In Section \ref{sec:sharp},
we present a multiresolution extension of our interpolative construction,
as well as theory that explains the representability of certain functions
with sharp features as QTT. In Section \ref{sec:multivariate}, we
explain how the preceding discussion extends to the setting of multivariate
functions, where several conventions for ordering tensor indices are
possible. In Section \ref{sec:numerical}, we conclude with numerical
experiments illustrating our theory and practical algorithms.

\subsection{Acknowledgments}

The author is grateful to Jielun Chen and Sandeep Sharma for stimulating
discussions on QTTs.

\section{Preliminaries \label{sec:Preliminaries}}

Consider a function $f:[0,1]\ra\R$. The idea of the quantized representation
is that we can place the variable $x\in[0,1]$ in bijection with sequences
of the form $\sigma_{1},\sigma_{2},\ldots$, where each $\sigma_{k}\in\{0,1\}$,
via the identification 
\begin{equation}
x=\sum_{k=1}^{\infty}2^{-k}\sigma_{k}=0.\sigma_{1}\sigma_{2}\sigma_{3}...,\label{eq:ident}
\end{equation}
 where the entries in the expression at right indicate binary decimal
expansions.

We choose a depth $K$ at which to truncate the decimal expansion,
so the identification 
\[
x\leftrightarrow(\sigma_{1},\ldots,\sigma_{K})
\]
 is a bijection between the dyadic grid $\mathcal{D}_{K}:=\left(2^{-K}\,\mathbb{Z}\right)\cap[0,1)$
and the set $\{0,1\}^{K}$. Based on this identification, we can in
turn identify functions $f:\mathcal{D}_{K}\ra\R$ with tensors $T\in(\R^{2})^{K}\simeq\R^{2}\times\cdots\times\R^{2}$
($K$ factors) via the relation
\[
f(x)=T(\sigma_{1},\ldots,\sigma_{K}).
\]
 Under such an identification, we will refer to $T$ as the \textbf{\emph{quantized
tensor}} representation of $f$.

The tensor $T$ can be viewed as a tensor train (TT) or matrix product
state (MPS) if there exist \textbf{\emph{tensor cores}} $A_{k}\in\R^{2\times r_{k-1}\times r_{k}}$,
$k=1,\ldots,K$, which we index as 
\[
A_{k}^{\alpha,\beta}(\sigma),\quad\alpha\in[r_{k-1}],\ \beta\in[r_{k}],\ \sigma\in\{0,1\},
\]
 such that 
\begin{equation}
T(\sigma_{1},\ldots,\sigma_{K})=\sum_{\alpha_{1}\in[r_{1}],\,\ldots,\,\alpha_{K-1}\in[r_{K-1}]}A_{1}^{1,\alpha_{1}}(\sigma_{1})A_{2}^{\alpha_{1},\alpha_{2}}(\sigma_{2})\cdots A_{d-1}^{\alpha_{d-2},\alpha_{d-1}}(\sigma_{K-1})A_{K}^{\alpha_{K-1},1}(\sigma_{K}).\label{eq:tt}
\end{equation}
 Here we have made use of the notation $[k]:=\{0,\ldots,k-1\}$, and
by convention we take $r_{0}=r_{K}=1$.

The values $r_{k}$ are called the \textbf{\emph{bond dimensions}}
or \textbf{\emph{TT ranks}}. Often we shall use the $\textsf{MATLAB}$
notation $\sigma_{k:l}=(\sigma_{k},\sigma_{k+1},\ldots,\sigma_{l})$
as a shorthand, e.g., writing tensor elements as $T(\sigma_{1:d})$.
It is also convenient to use the product shorthand $T=A_{1}A_{2}\cdots A_{K}$
for such a decomposition.

In Figure \ref{fig:diagram}, we provide a standard visual representation
of MPS/TT in tensor network diagram notation. In the current context,
where the tensor is obtained from discretization on a dyadic grid,
such a presentation of $T$ is called a \textbf{\emph{quantized tensor
train}} (QTT).

\begin{figure}
\centering{}\includegraphics[scale=0.6]{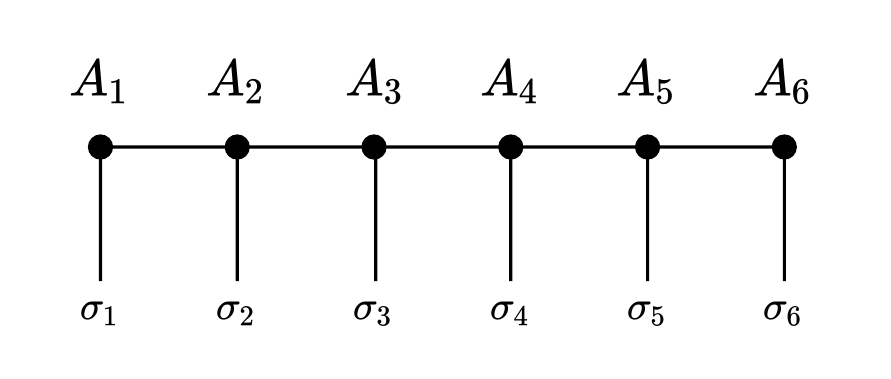}\caption{Tensor network diagram for MPS/TT (\ref{eq:tt}). The nodes indicate
tensor cores. Shared edges indicate contracted indices, while open
edges indicate indices of the tensor $T(\sigma_{1},\ldots,\sigma_{6})$
represented by the diagram.}
\label{fig:diagram}
\end{figure}

\section{Decaying rank bounds \label{sec:decaying}}

For $x\in\mathcal{D}_{K}$, which can be written uniquely as $x=\sum_{k=1}^{K}2^{-k}\sigma_{k}$,
it is useful to define the component parts 
\[
x_{\leq m}:=\sum_{k=1}^{m}2^{-k}\sigma_{k},\quad x_{>m}:=\sum_{k=m+1}^{K}2^{-k}\sigma_{k}.
\]
 Note that $x_{\leq m}$ and $x_{>m}$ are arbitrary elements of $\mathcal{D}_{m}$
and $2^{-m}\,\mathcal{D}_{K-m}$, respectively, and it is useful to
keep in mind the identifications 
\[
x_{\leq m}\leftrightarrow(\sigma_{1:m}),\quad x_{>m}\leftrightarrow(\sigma_{m+1:K}).
\]

Thinking of $v\mapsto f(u+2^{-m}v)$ as a function of $v:[0,1]\ra\R$
for fixed $u$, we can approximate it by interpolation as 
\[
f(u+2^{-m}v)\approx\sum_{\alpha}f(u+2^{-m}c^{\alpha})P^{\alpha}(v),
\]
 where $c^{\alpha}\in[0,1]$ are interpolation points (e.g., Chebyshev
nodes shifted and scaled to the interval $[0,1]$), and $P^{\alpha}$
are the corresponding interpolating functions (e.g., Chebyshev cardinal
functions, in the sense of \cite{BOYD1992}). Our error analysis will
focus on the case of Chebyshev interpolation, but we comment that
the interpolative construction that we introduce extends naturally
to other interpolation schemes. In fact, in Section \ref{sec:sparse},
we will replace ordinary Chebyshev interpolation with a notion of
local Chebyshev interpolation introduced in \cite{BOYD1992}, yielding
significant practical speedups when the number of interpolation points
is large.

Now since $f(x)=f(x_{\leq m}+x_{>m})$, we can write 
\[
T(\sigma_{1:K})=f(x)\approx\sum_{\alpha}\underbrace{f(x_{\leq m}+2^{-m}c^{\alpha})}_{=:T_{\mathrm{L}}^{\alpha}(\sigma_{1:m})}\underbrace{P^{\alpha}(2^{m}x_{>m})}_{=:T_{\mathrm{R}}^{\alpha}(\sigma_{m+1:K})},
\]
 so the $m$-th \textbf{\emph{unfolding matrix}} $T_{m}(\sigma_{1:m},\sigma_{m+1:K})$
admits a low-rank decomposition, which in turn suggests that $T$
has low TT ranks \cite{oseledets2011tensortrain}. Hence we are motivated
to bound the error of Chebyshev interpolation, which is well-understood.

\subsection{Notation}

Before stating the technical results, we fix some notation. For any
$N\geq1$, let 
\begin{equation}
c_{N}^{\alpha}=\frac{\cos(\pi\alpha/N)+1}{2},\quad\alpha=0,\ldots,N\label{eq:cheblob}
\end{equation}
 denote the \textbf{\emph{Chebyshev-Lobatto grid}}, shifted and scaled
to the interval $[0,1]$. Meanwhile let $P_{N}^{\alpha}$ denote the
Lagrange interpolating polynomials for these nodes, also known as
the Chebyshev cardinal functions, which can be evaluated directly
with simple trigonometric formulas \cite{BOYD1992}.

For measuring the error of low-rank tensor decompositions it is useful
to define the \textbf{\emph{tensor norms }}
\begin{equation}
\Vert S\Vert_{\mathrm{\infty}}=\max_{\sigma_{1:K}\in\{0,1\}^{K}}\vert S(\sigma_{1:K})\vert,\quad\Vert S\Vert_{2}=\sqrt{\frac{1}{2^{K}}\sum_{\sigma_{1:K}\in\{0,1\}^{K}}\vert S(\sigma_{1:K})\vert^{2}}\label{eq:tensornorms}
\end{equation}
 for general $S\in\R^{2}\times\cdots\times\R^{2}$ ($K$ factors).
Note that if $S$ is a quantized tensor representation for a function
$g:[0,1]\ra\R$, then our tensor 2-norm can be viewed as a Riemann
sum approximation of the $L^{2}([0,1])$ norm of $g$. Observe that
\begin{equation}
\Vert S\Vert_{2}\leq\Vert S\Vert_{\infty}\label{eq:normineq}
\end{equation}
 for all $S$.

It is useful moreover to define the ordinary \textbf{\emph{Frobenius
norm}} of a tensor $\Vert S\Vert_{\mathrm{F}}$ as the ordinary Euclidean
norm of its vectorization. Finally, we will use $\Vert\,\cdot\,\Vert$
to indicate the \textbf{\emph{operator norm}} (induced by the vector
Euclidean norm) of tensors when viewed as matrices as shall be clear
from context.

If for $p\in\{2,\infty\}$, we define the \textbf{\emph{$(\ve,p)$
rank}} of the $m$-th unfolding matrix of $T$ to be the smallest
$r$ such that 
\[
\left\Vert T-\sum_{\alpha=0}^{r-1}T_{\mathrm{L}}^{\alpha}\otimes T_{\mathrm{R}}^{\alpha}\right\Vert _{p}\leq\ve
\]
 for some tensors $T_{\mathrm{L}}^{\alpha}\in(\R^{2})^{m}$ and $T_{\mathrm{R}}^{\alpha}\in(\R^{2})^{K-m}$,
$\alpha=0,\ldots,r-1$. We will denote the $(\ve,p)$ rank of the
$m$-th unfolding matrix as $r_{m}^{(\ve,p)}[T]$. Observe that $r_{m}^{(\ve,2)}[T]\leq r_{m}^{(\ve,\infty)}[T]$.

Finally, for every level $m$, grid size $N$, and $f:[0,1]\ra\R$,
define the polynomial interpolation error 
\begin{equation}
E_{m,N}[f]:=\max_{u\in[0,1-2^{-m}],\,v\in[0,1]}\left|f(u+2^{-m}v)-\sum_{\alpha=0}^{N}f(u+2^{-m}c_{N}^{\alpha})P_{N}^{\alpha}(v)\right|.\label{eq:interperror}
\end{equation}
 Note in particular that $E_{m,N}[f]=0$ whenever $f$ is a polynomial
of degree at most $N$ because $(N+1)$-point Lagrange interpolation
is exact in this case.

\subsection{Interpolation bounds \label{sec:interp}}

First we state a lemma summarizing how the interpolation error (\ref{eq:interperror})
controls the ranks of the unfolding matrices.
\begin{lem}
Let $T$ be a quantized tensor representation for $f$ on $\mathcal{D}_{K}$.
Let $\ve>0$, and suppose that $E_{m,N}[f]\leq\ve$. Then the $(\ve,\infty)$
rank of the $m$-th unfolding matrix of $T$ is at most $N+1$.
\end{lem}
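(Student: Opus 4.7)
The plan is to use exactly the interpolation identity sketched in the paragraph just before the lemma, and then check that the resulting bound holds uniformly over tensor entries. First I would unpack the unfolding matrix: any entry of $T$ can be written as $T(\sigma_{1:K}) = f(x_{\leq m} + x_{>m})$, where $x_{\leq m} \in \mathcal{D}_m$ depends only on $\sigma_{1:m}$ and $x_{>m} \in 2^{-m}\mathcal{D}_{K-m}$ depends only on $\sigma_{m+1:K}$. A quick sanity check shows $x_{\leq m} \in [0, 1-2^{-m}]$ and $v := 2^{m} x_{>m} \in [0, 1-2^{-(K-m)}] \subset [0,1]$, which places us inside the domain of the error bound \eqref{eq:interperror}.

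Next I would define, for each $\alpha \in \{0, \ldots, N\}$, the rank-one factors
\[
T_{\mathrm L}^{\alpha}(\sigma_{1:m}) := f\!\left(x_{\leq m} + 2^{-m} c_{N}^{\alpha}\right), \qquad T_{\mathrm R}^{\alpha}(\sigma_{m+1:K}) := P_{N}^{\alpha}(2^{m} x_{>m}),
\]
and form the candidate approximation $\widetilde{T} := \sum_{\alpha=0}^{N} T_{\mathrm L}^{\alpha} \otimes T_{\mathrm R}^{\alpha}$. By construction, for any $\sigma_{1:K} \in \{0,1\}^{K}$,
\[
T(\sigma_{1:K}) - \widetilde{T}(\sigma_{1:K}) = f\!\left(x_{\leq m} + 2^{-m} v\right) - \sum_{\alpha=0}^{N} f\!\left(x_{\leq m} + 2^{-m} c_{N}^{\alpha}\right) P_{N}^{\alpha}(v),
\]
with $u = x_{\leq m}$ and $v = 2^m x_{>m}$ as above. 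The right-hand side is precisely the pointwise Chebyshev interpolation error that is bounded in absolute value by $E_{m,N}[f] \leq \varepsilon$ uniformly in $(u,v) \in [0, 1-2^{-m}] \times [0,1]$.

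Taking the maximum over $\sigma_{1:K}$ yields $\|T - \widetilde{T}\|_{\infty} \leq \varepsilon$, and since $\widetilde{T}$ is a sum of $N+1$ tensor products of the required shape, the definition of $(\varepsilon,\infty)$ rank gives $r_{m}^{(\varepsilon,\infty)}[T] \leq N+1$. I do not expect any real obstacle here: the argument is essentially a bookkeeping exercise that reinterprets the interpolation formula as a separated-variable expansion in the binary digits. The only point requiring care is verifying the ranges of $x_{\leq m}$ and $2^m x_{>m}$ so that the supremum in \eqref{eq:interperror} actually covers all binary configurations, which the elementary estimates above handle.
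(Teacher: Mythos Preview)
Your proposal is correct and takes essentially the same approach as the paper: the paper's proof is a one-liner stating that the lemma ``follows directly from definitions together with the fact that $\mathcal{D}_{m}\subset[0,1-2^{-m}]$ and $\mathcal{D}_{K-m}\subset[0,1]$,'' and you have simply written out those definitions and range checks in full detail.
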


\begin{proof}
This follows directly from definitions together with the fact that
$\mathcal{D}_{m}\subset[0,1-2^{-m}]$ and $\mathcal{D}_{K-m}\subset[0,1]$.
\end{proof}
Next we show how smoothness assumptions on $f$ control the interpolation
error (\ref{eq:interperror}), which in turn, by the preceding lemma,
controls the ranks of the unfolding matrices. The first result relies
only on high-order differentiability of $f$ and is based on standard
results \cite{TrefethenBook2019} controlling Chebyshev interpolation
error.
\begin{prop}
\label{prop:diffinterp}Suppose that $f:[0,1]\ra\R$ is $p+1$ times
differentiable and that $\Vert f^{(p+1)}\Vert_{L^{\infty}([0,1])}\leq C$,
where $p\geq1$. If $N>p$, then 
\[
E_{m,N}[f]\leq\frac{4C}{\pi}\,\frac{2^{-m}}{p(N-p)^{p}}.
\]
 If $T$ is a quantized tensor representation of $f$ on $\mathcal{D}_{K}$
for $K>m$, then it follows that 
\[
r_{m}^{(\ve,\infty)}[T]\leq1+p+\left\lceil \left(\frac{4C}{\pi}\,\frac{2^{-m}}{p\ve}\right)^{1/p}\right\rceil .
\]
\end{prop}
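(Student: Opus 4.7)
The plan is a direct reduction to a one-dimensional Chebyshev interpolation bound via rescaling. For fixed $u \in [0, 1-2^{-m}]$, introduce $g(v) := f(u + 2^{-m}v)$ on $[0,1]$, so that the inner quantity in the definition (\ref{eq:interperror}) of $E_{m,N}[f]$ is precisely the pointwise error of $(N+1)$-point Chebyshev-Lobatto interpolation of $g$ at the nodes $c_N^\alpha$. Hence $E_{m,N}[f]$ equals the supremum over $u$ of the $L^\infty([0,1])$ interpolation error for $g$.

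I would then invoke a standard bound on Chebyshev interpolation error in terms of smoothness, such as Theorem 7.2 in \cite{TrefethenBook2019}, applied after a linear change of variables from $[0,1]$ to $[-1,1]$. The chain rule yields $\Vert g^{(p+1)}\Vert_\infty \leq 2^{-m(p+1)} C$, with a further factor of $2^{-(p+1)}$ arising when pulling back to the standard interval. Bounding the total variation of the $p$-th derivative by twice the sup-norm of the $(p+1)$-th derivative, one arrives at an error estimate of the form $\frac{4C}{\pi}\,\frac{2^{-m(p+1)-p}}{p(N-p)^p}$, which is at least as strong as the stated bound since $2^{-m(p+1)-p} \leq 2^{-m}$ whenever $p \geq 1$ and $m \geq 0$. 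For the rank statement, I would combine this estimate with the preceding lemma: the smallest $N > p$ making the right-hand side at most $\ve$ satisfies $N \geq p + \left(\frac{4C \cdot 2^{-m}}{\pi p \ve}\right)^{1/p}$, and taking a ceiling converts $r_m^{(\ve,\infty)}[T] \leq N+1$ into the claimed formula.

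The main subtlety lies in selecting the appropriate form of the classical Chebyshev interpolation bound, as several variants exist in the literature with slightly different constants and smoothness hypotheses; keeping careful track of the two sources of $2^{-m}$-type scaling (from the decimation of the argument and from the pullback of the interval) is the only point where care is required. All other steps are routine: the reduction to a single-interval interpolation problem is mechanical, the derivative scaling is an immediate consequence of the chain rule, and inverting the algebraic decay estimate to extract an explicit $N$ is elementary.
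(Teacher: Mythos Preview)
Your proposal is correct and follows essentially the same route as the paper: rescale to a reference interval, apply Theorem 7.2 of \cite{TrefethenBook2019} with the total variation of the $p$-th derivative bounded via the sup-norm of the $(p+1)$-th, and then invert the resulting inequality using the preceding lemma. Your chain-rule bookkeeping is in fact tighter than the paper's own (you correctly pick up the full factor $2^{-(m+1)(p+1)}$ rather than just $2^{-(m+1)}$), yielding the sharper intermediate bound $\frac{4C}{\pi}\,\frac{2^{-m(p+1)-p}}{p(N-p)^{p}}$, which indeed dominates the stated estimate.
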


\begin{rem}
Note that the rank bound can never drop below $p+2$, so apparently
we are penalized in our bound at large depth $m$ for using high-order
differentiability. Ultimately, we would like to claim that the rank
drops all the way to $3$ once we reach sufficient depth $m$. Indeed,
note that from the lemma it follows that if we let $C^{(q)}$ be a
pointwise bound for $\vert f^{(q+1)}\vert$ for each $q=1,\ldots p$,
then 
\[
r_{m}^{(\ve,\infty)}[T]\leq1+\left\lceil \min_{q=1,\ldots p}\left\{ q+\left[\frac{2C^{(q)}}{\pi}\,\frac{2^{-m}}{q\ve}\right]^{1/q}\right\} \right\rceil .
\]
 This rank bound satisfies $r_{m}^{(\ve,\infty)}[T]\ra3$ as $m\ra\infty$.
A more careful argument should recover $1$ as the limiting rank,
since constant interpolation is accurate at sufficiently small scales,
but we will omit more detailed statements for simplicity.\\

The proof is given in Appendix A.\\
\end{rem}

The next result is an improved bound in the case where $f$ extends
analytically to a neighborhood of the interval $[0,1]$ and is again
based on standard results \cite{TrefethenBook2019} controlling Chebyshev
interpolation error in this case. To state the result it is useful
first to make a definition: 
\begin{defn}
For $\rho>1$, define the \textbf{\emph{Bernstein ellipse }}$\mathcal{E}_{\rho}\subset\mathbb{C}$
by 
\[
\mathcal{E}_{\rho}:=\left\{ z\in\mathbb{C}\,:\,\left[\frac{\Re(z)}{a_{\rho}}\right]^{2}+\left[\frac{\Im(z)}{b_{\rho}}\right]^{2}\leq1\right\} ,
\]
 where 
\[
a_{\rho}:=\frac{\rho+\rho^{-1}}{2},\quad b_{\rho}:=\frac{\rho-\rho^{-1}}{2}.
\]

Note that $\mathcal{E}_{\rho}\supset[-1,1]$, and moreover $\mathcal{E}_{\rho'}\subset\mathcal{E}_{\rho}$
for $1<\rho'\leq\rho$.
\end{defn}

\begin{prop}
\label{prop:analyticinterp}Suppose that for some $\rho>1$, $f:[0,1]\ra\R$
extends analytically to $\mathcal{E}_{\rho}':=\frac{1}{2}\left(\mathcal{E}_{\rho}+1\right)$.
Moreover suppose that there exists $B\geq0$ such that $\vert f\vert\leq B$
on $\mathcal{E}_{\rho}'$. Let 
\[
\rho_{m}:=\max\left[\rho,\,2^{m}\frac{(\rho-1)^{2}}{\rho}\right].
\]
 Then 
\[
E_{m,N}[f]\leq\frac{4B\rho_{m}^{-N}}{\rho_{m}-1}.
\]
If $T$ is a quantized tensor representation of $f$ on $\mathcal{D}_{K}$
for $K>m$, then it follows that 
\[
r_{m}^{(\ve,\infty)}[T]\leq1+\max\left\{ 1,\,\left\lceil \log_{\rho_{m}}(1/\ve)-\log_{\rho_{m}}(\rho_{m}-1)+\log_{\rho_{m}}(4B)\right\rceil \,\right\} .
\]
\end{prop}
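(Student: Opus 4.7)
The plan is to reduce the interpolation bound to the standard Chebyshev interpolation error estimate on $[-1,1]$, then use a focal-distance characterization of Bernstein ellipses to determine how large a Bernstein ellipse of analyticity can be pulled back from $\mathcal{E}_\rho'$. Applying the affine substitution $v = (w+1)/2$, the $(N+1)$-point Chebyshev-Lobatto interpolation of $v \mapsto f(u + 2^{-m}v)$ on $[0,1]$ is equivalent to the standard $(N+1)$-point Chebyshev-Lobatto interpolation on $[-1,1]$ applied to $g(w) := f(u + 2^{-(m+1)}(w+1))$. The classical result quoted from \cite{TrefethenBook2019} then gives the interpolation error bound $4B\tilde\rho^{-N}/(\tilde\rho - 1)$ whenever $g$ extends analytically to $\mathcal{E}_{\tilde\rho}$ with $|g| \leq B$ there, so my task reduces to finding the largest admissible $\tilde\rho$, uniformly in $u \in [0, 1-2^{-m}]$.

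The central geometric observation I would exploit is that $\mathcal{E}_\rho$ has foci at $\pm 1$, since $a_\rho^2 - b_\rho^2 = 1$. Hence $\mathcal{E}_\rho = \{y \in \mathbb{C} : |y-1| + |y+1| \leq 2a_\rho\}$, and consequently $\mathcal{E}_\rho'$ is the set of $y$ satisfying $|y| + |y-1| \leq a_\rho$ (foci at $0$ and $1$). The image of $\mathcal{E}_{\tilde\rho}$ under the affine map $w \mapsto u + 2^{-(m+1)}(w+1)$ is again an ellipse, with foci at $u$ and $u + 2^{-m}$ (the images of $\pm 1$) and major semi-axis $2^{-(m+1)}a_{\tilde\rho}$; equivalently, it is the locus of $y$ with $|y - u| + |y - u - 2^{-m}| \leq 2^{-m} a_{\tilde\rho}$.

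Two applications of the triangle inequality, exploiting that $u \in [0, 1-2^{-m}]$ so that both $u \geq 0$ and $1 - u - 2^{-m} \geq 0$, then give $|y| \leq |y - u| + u$ and $|y - 1| \leq |y - u - 2^{-m}| + (1 - u - 2^{-m})$, whose sum yields
\[
|y| + |y - 1| \;\leq\; (|y - u| + |y - u - 2^{-m}|) + (1 - 2^{-m}) \;\leq\; 2^{-m} a_{\tilde\rho} + (1 - 2^{-m})
\]
for any $y$ in the image ellipse. Containment in $\mathcal{E}_\rho'$ is therefore ensured whenever $a_{\tilde\rho} \leq 2^m(a_\rho - 1) + 1$. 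Using the identity $a_\rho - 1 = (\rho-1)^2/(2\rho)$, a short case split shows that $\tilde\rho = \rho_m$ satisfies this inequality in both regimes of the $\max$: when $\rho_m = \rho$, it reduces to $1 \leq 2^m$; when $\rho_m = 2^m(\rho-1)^2/\rho$, it follows from a direct manipulation of $a_{\rho_m} = (\rho_m + \rho_m^{-1})/2$, using that the case condition forces $\rho_m \geq \rho > 1$.

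Once the interpolation bound is established, the rank bound is arithmetic: the preceding lemma gives $r_m^{(\ve,\infty)}[T] \leq N + 1$ for any $N$ satisfying $4B\rho_m^{-N}/(\rho_m - 1) \leq \ve$, and rearranging yields the stated $N \geq \log_{\rho_m}(1/\ve) - \log_{\rho_m}(\rho_m - 1) + \log_{\rho_m}(4B)$, with the outer $\max\{1, \cdot\}$ covering the regime where the right-hand side is already nonpositive. I expect the main friction to be in the focal-distance containment step: formulating the right crisp inequality in terms of $a_{\tilde\rho}$ versus $a_\rho$ is what lets the triangle inequality close, and the elementary identity $2(a_\rho - 1) = (\rho-1)^2/\rho$ is what makes the final expression for $\rho_m$ emerge naturally.
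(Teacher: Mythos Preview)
Your proof is correct and reaches the same conclusion as the paper, but the geometric containment step is handled by a genuinely different argument. The paper shifts and scales to the reference configuration $(1-2^{-m}) + 2^{-m}\mathcal{E}_{\rho'} \subset \mathcal{E}_\rho$ and then imposes separate sufficient conditions on the semi-major and semi-minor axes, namely $(1-2^{-m}) + 2^{-m}a_{\rho'} \leq a_\rho$ and $2^{-m}b_{\rho'} \leq b_\rho$; it then uses the crude bounds $r/2 \leq a_r \leq (r+1)/2$ and $(r-1)/2 \leq b_r \leq r/2$ to reduce both conditions to $\rho' \leq 2^m(\rho-1)^2/\rho$. You instead invoke the focal-sum description $\mathcal{E}_\rho = \{y : |y-1|+|y+1| \leq 2a_\rho\}$, which collapses the containment question to the single inequality $a_{\tilde\rho} \leq 2^m(a_\rho-1)+1$ via two triangle inequalities, and then close with the exact identity $a_\rho - 1 = (\rho-1)^2/(2\rho)$. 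Your route is tighter and more transparent: it needs only one scalar inequality rather than two, avoids the auxiliary bounds on $a_r,b_r$, and sidesteps having to justify that checking axis-extreme points suffices for containment of translated ellipses. The paper's route, on the other hand, stays closer to the axis-parameter definition of $\mathcal{E}_\rho$ and may be more recognizable to readers unfamiliar with the focal characterization.
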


\begin{rem}
Consider the asymptotic rank as $m\ra\infty$, in which limit $\rho_{m}=\Omega(2^{m})$,
and think of $\ve>0$ as small. In this limit $\log_{\rho_{m}}(\rho_{m}-1)\ra1$,
$\log_{\rho_{m}}(4M)\ra0$, and $\log_{\rho_{m}}(1/\ve)\sim m^{-1}\log_{2}(1/\ve)$.
Therefore $r_{m}^{(\ve,\infty)}[T]$ is roughly bounded above by $1+\lceil m^{-1}\log_{2}(1/\ve)\rceil$.\\

The proof is also given in Appendix A.\\

Finally we prove rank bounds under the still stronger assumption that
$f$ is bandlimited.
\end{rem}

\begin{defn}
\label{def:bandlimited} We say that a function $f:\R\ra\R$ is \textbf{\emph{$\Omega$-bandlimited
}}for $\Omega>0$ if $f$ can be recovered as the inverse Fourier
transform 
\[
f(x)=\frac{1}{2\pi}\int e^{i\omega x}d\mu(\omega)
\]
 of a signed measure $\mu$ supported on the interval $[-\Omega,\Omega]$
with total variation $\vert\mu\vert<\infty$. In this case we refer
to $\mu$ as the \textbf{\emph{spectral measure }}of $f$. In particular,
if $d\mu(\omega)=\hat{f}(\omega)\,d\omega$ is absolutely continuous,
then $\vert\mu\vert=\Vert\hat{f}\Vert_{L^{1}(\R)}$, and $\hat{f}$
is supported on $[-\Omega,\Omega]$.\\
\end{defn}

\begin{prop}
\label{prop:bandlimitedinterp} Suppose that $f:[0,1]\ra\R$ is the
restriction of an $\Omega$-bandlimited function with spectral measure
$\mu$. Then 
\[
E_{m,N}[f]\le\frac{2\vert\mu\vert}{\pi}\,e^{\frac{1}{2}\left(2^{-m}\Omega-N\right)}.
\]
 If $T$ is a quantized tensor representation of $f$ on $\mathcal{D}_{K}$
for $K>m$, then it follows that 
\[
r_{m}^{(\ve,\infty)}[T]\leq1+\left\lceil 2^{-m}\Omega+2\log_{+}\left(\frac{2\vert\mu\vert}{\pi\ve}\right)\right\rceil .
\]
\end{prop}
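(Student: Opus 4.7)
The plan is to exploit the Fourier representation of the bandlimited $f$ to reduce the interpolation error bound for $f$ to a uniform interpolation error bound for the complex exponentials $e^{i\omega v}$ with $|\omega|\le 2^{-m}\Omega$, which I can handle with the analytic Chebyshev estimate already established in Proposition \ref{prop:analyticinterp}. The rank bound then follows from the earlier lemma relating $E_{m,N}[f]$ to $r_m^{(\ve,\infty)}[T]$ by solving the resulting inequality for $N$.

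First I would fix $u\in[0,1-2^{-m}]$ and use Definition \ref{def:bandlimited} to write
\[
f(u+2^{-m}v) \;=\; \frac{1}{2\pi}\int e^{i\omega u}\,e^{i\omega' v}\,d\mu(\omega),\qquad \omega' := 2^{-m}\omega.
\]
Since Lagrange interpolation at the Chebyshev-Lobatto nodes $c_N^\alpha$ is a linear operation in the interpolated function, Fubini lets me commute interpolation in $v$ with integration against $\mu$. Taking absolute values and using $|e^{i\omega u}|=1$ yields
\[
\Bigl|f(u+2^{-m}v) - \sum_{\alpha=0}^N f(u+2^{-m}c_N^\alpha)\,P_N^\alpha(v)\Bigr|
\;\le\; \frac{1}{2\pi}\int \Bigl| e^{i\omega' v} - \sum_{\alpha=0}^{N} e^{i\omega' c_N^\alpha}\,P_N^\alpha(v) \Bigr|\,d|\mu|(\omega),
\]
uniformly in $u,v$. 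Thus it suffices to bound the $L^\infty([0,1])$ error of Chebyshev interpolation of the entire function $g_{\omega'}(v)=e^{i\omega' v}$ for each $\omega$ with $|\omega|\le\Omega$.

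Next I apply the analytic Chebyshev bound from Proposition \ref{prop:analyticinterp} (trivially extended to complex-valued functions by treating real and imaginary parts, or by inspecting its Cauchy-integral proof) to $g_{\omega'}$ at scale $m=0$. For any $\rho>1$, the parameter $B$ can be taken as $\sup_{z\in\mathcal{E}_\rho'}|e^{i\omega' z}| = e^{|\omega'|b_\rho/2}$, since $|\mathrm{Im}(z)|\le b_\rho/2$ on $\mathcal{E}_\rho'$. Choosing $\rho=2$ (so $b_\rho/2=3/8$) gives
\[
\|g_{\omega'}-p_N g_{\omega'}\|_{L^\infty[0,1]}
\;\le\; \frac{4 e^{3|\omega'|/8}}{2-1}\,2^{-N}
\;\le\; 4\,e^{|\omega'|/2}\,e^{-N/2}
\;=\; 4\,e^{(|\omega'|-N)/2},
\]
where in the last step I use $3/8\le 1/2$ and $\log 2\ge 1/2$. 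Integrating this pointwise bound against $d|\mu|(\omega)/(2\pi)$ and using $|\omega'|\le 2^{-m}\Omega$ produces
\[
E_{m,N}[f] \;\le\; \frac{1}{2\pi}\cdot 4\,e^{(2^{-m}\Omega - N)/2}\,|\mu| \;=\; \frac{2|\mu|}{\pi}\,e^{\frac{1}{2}(2^{-m}\Omega-N)},
\]
which is the claimed interpolation bound.

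Finally, for the rank bound I invoke the lemma at the start of Section \ref{sec:interp}: $r_m^{(\ve,\infty)}[T]\le N+1$ whenever $E_{m,N}[f]\le\ve$. Solving $\tfrac{2|\mu|}{\pi}e^{(2^{-m}\Omega - N)/2}\le\ve$ for $N$ gives $N\ge 2^{-m}\Omega + 2\log(2|\mu|/(\pi\ve))$, and replacing the logarithm by $\log_+$ handles the trivial regime where $2|\mu|/(\pi\ve)<1$. Taking $N$ to be the ceiling of the right-hand side and adding $1$ yields the stated rank bound. There is no substantive obstacle: the only care needed is in the choice of $\rho$ that simultaneously produces the clean exponent $\tfrac{1}{2}(2^{-m}\Omega-N)$ and the constant $\tfrac{2}{\pi}$, for which $\rho=2$ conveniently works.
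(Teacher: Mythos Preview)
Your proof is correct and rests on the same ingredients as the paper's: the Fourier representation of $f$, analyticity of the exponentials on a Bernstein ellipse, Trefethen's analytic Chebyshev bound with the choice $\rho=2$, and the inequality $\log 2\ge\tfrac12$. The only organizational difference is that the paper bounds $|g_{m,u}|$ on $\mathcal{E}_\rho$ directly from the Fourier integral and applies the analytic bound once, whereas you commute interpolation with integration via Fubini and apply the analytic bound mode-by-mode before integrating; both routes yield the same constants.
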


\begin{rem}
Intuitively, the result suggests that for bandlimited functions, the
QTT ranks decay like $2^{-m}$ with the depth $m$.\\

The proof is also given in Appendix A.\\

By combining Proposition \ref{prop:bandlimitedinterp} with the trivial
bound for the leading ranks of an MPS/TT, we obtain the striking result
that the QTT ranks of an $\Omega$-bandlimited function are uniformly
bounded by $\sqrt{\Omega}+O\left(1+\log_{+}\left(\frac{\vert\mu\vert}{\ve}\right)\right)$.\\
\end{rem}

\begin{cor}
\label{cor:sqrt}Let $T$ be a quantized tensor representation for
$f$ on $\mathcal{D}_{K}$, where the depth $K$ is an arbitrary positive
integer and $f:[0,1]\ra\R$ is the restriction of an $\Omega$-bandlimited
function with spectral measure $\mu$. Then for any $\ve>0$, the
$(\ve,\infty)$ ranks of all unfolding matrices of $T$ are bounded
uniformly by $\sqrt{\Omega}+O\left(1+\log_{+}\left(\frac{\vert\mu\vert}{\ve}\right)\right)$.
\end{cor}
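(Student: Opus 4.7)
The plan is to balance Proposition \ref{prop:bandlimitedinterp} against the trivial dimensional bound on the leading unfolding ranks of a quantized tensor. The interpolative bound $r_m^{(\ve,\infty)}[T] \leq 1 + \lceil 2^{-m}\Omega + 2\log_+(2|\mu|/(\pi\ve))\rceil$ is small when $m$ is large, but it grows linearly in $\Omega$ when $m$ is small. On the other hand, the $m$-th unfolding matrix of $T$ has size $2^m \times 2^{K-m}$, so its ordinary rank, and in particular its $(\ve,\infty)$ rank, is at most $2^m$ with no $\ve$ dependence. Balancing the two bounds by solving $2^m \approx 2^{-m}\Omega$ suggests the crossover scale $m^* \approx \tfrac{1}{2}\log_2 \Omega$, at which both quantities simultaneously equal $\sqrt{\Omega}$.

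Concretely, I would set $m^* := \lceil \tfrac{1}{2}\log_2 \Omega \rceil$ and argue by cases. For $m \leq m^*$, the trivial bound gives
\[
r_m^{(\ve,\infty)}[T] \leq 2^m \leq 2^{m^*} \leq 2\sqrt{\Omega}.
\]
For $m > m^*$, Proposition \ref{prop:bandlimitedinterp} applies with $2^{-m}\Omega \leq 2^{-m^*}\Omega \leq \sqrt{\Omega}$, yielding
\[
r_m^{(\ve,\infty)}[T] \leq 2 + \sqrt{\Omega} + 2\log_+\!\left(\frac{2|\mu|}{\pi\ve}\right).
\]
Taking the maximum of the two regimes gives the uniform bound $\sqrt{\Omega} + O(1 + \log_+(|\mu|/\ve))$, independent of $K$ and $m$, as claimed.

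There is essentially no obstacle here; the entire argument is a one-line balancing observation, as already hinted in the remark preceding the corollary. The only point worth confirming carefully is that the trivial bound $r_m^{(\ve,\infty)}[T] \leq 2^m$ really does hold with no $\ve$ dependence. This is immediate: viewed as a $2^m \times 2^{K-m}$ matrix, the $m$-th unfolding has at most $\min(2^m, 2^{K-m}) \leq 2^m$ nonzero singular values, so its exact SVD already provides a rank-$2^m$ decomposition into outer products, and any $(\ve,\infty)$ rank is bounded above by the exact rank. Once this is in hand the two-case argument above gives the corollary immediately.
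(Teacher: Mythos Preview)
Your argument is essentially identical to the paper's proof: both split into the trivial dimensional bound $r_m \leq 2^m$ for small $m$ and Proposition~\ref{prop:bandlimitedinterp} for large $m$, with crossover at $\tfrac{1}{2}\log_2\Omega$. One small correction: with your choice $m^* = \lceil \tfrac{1}{2}\log_2\Omega\rceil$ the first regime only gives $2^m \leq 2\sqrt{\Omega}$, which does not fit the stated form $\sqrt{\Omega} + O(1+\log_+(|\mu|/\ve))$; taking instead $m^* = \lfloor \tfrac{1}{2}\log_2\Omega\rfloor$ (as the paper does) yields $2^m \leq \sqrt{\Omega}$ for $m \leq m^*$ while still giving $2^{-m}\Omega < \sqrt{\Omega}$ for $m > m^*$, and the argument goes through with the correct leading constant.
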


\begin{rem}
Consider the limit of large $\Omega$. Since the Shannon sampling
theorem suggests that we can resolve $f$ up to fixed accuracy with
a trigonometric polynomial of order $O(\Omega)$, by constructing
$f$ as a sum of $O(\Omega)$ complex exponentials, we might naively
expect that the QTT ranks of $f$ are $O(\Omega)$. However, Corollary
\ref{cor:sqrt} in fact suggests that the ranks are about $\sqrt{\Omega}$.
\end{rem}

\begin{proof}
For any $m$, the rank of the $m$-th unfolding matrix is trivially
bounded by the number of rows in this matrix, i.e., $2^{m}$. Now
for any $m\leq\lfloor\frac{1}{2}\log_{2}\Omega\rfloor$, we have in
particular that $m\leq\frac{1}{2}\log_{2}\Omega$ and therefore $2^{m}\leq\sqrt{\Omega}$.
Thus we have a uniform bound of $\sqrt{\Omega}$ on the ranks of the
leading unfolding matrices $m=1,\ldots,\lceil\frac{1}{2}\log_{2}\Omega\rceil$.

Meanwhile, for $m\geq\lceil\frac{1}{2}\log_{2}\Omega\rceil$, in particular
we have $2^{-m}\leq\frac{1}{\sqrt{\Omega}}$, so the rank bound of
Proposition \ref{prop:bandlimitedinterp} implies that 
\[
r_{m}^{(\ve,\infty)}[T]\leq1+\left\lceil \sqrt{\Omega}+2\log_{+}\left(\frac{2\vert\mu\vert}{\pi\ve}\right)\right\rceil =\sqrt{\Omega}+O\left(1+\log_{+}\left(\frac{\vert\mu\vert}{\ve}\right)\right).
\]
 Therefore we have the desired rank bound for all $m$.
\end{proof}

\section{Interpolative construction of QTTs \label{sec:interpolative}}

Although we have shown how smoothness quantitatively bounds the ranks
of the unfolding matrices of a quantized tensor representation, we
have not demonstrated an explicit construction of a QTT. In Section
\ref{sec:basicconstruction}, we will present a direct construction
based on Chebyshev interpolation using the Chebyshev-Lobatto grid
$c_{N}^{\alpha}=c^{\alpha}$, where we typically omit $N$ from this
notation going forward for visual clarity. The TT ranks in this construction
are all $N+1$.

In some settings, the numerical ranks of a quantized tensor representation
may be small even when a large Chebyshev-Lobatto grid is required
to fully resolve the target function. The downside of our basic construction
in this case is that if the quantized tensor representation has ranks
much smaller than $N$, then revealing this rank via post hoc MPS/TT
compression \cite{oseledets2011tensortrain} would require $O(N^{3})$
operations. (For the purpose of our big-O notation, we view the depth
$K$ of the network as a constant. If included, all of our big-O expressions
should include an additional factor of $K$.)

Therefore in Section \ref{sec:rankrevealing}, we present a rank-revealing
variant of the basic construction. If the true maximum TT rank (for
desired error tolerance) is $r$, then this algorithm only requires
$O(N^{2}r)$ operations.

Then in Section \ref{sec:sparse}, we show how the dense tensor cores
implementing Chebyshev interpolation can be replaced with sparse tensors,
following \cite{BOYD1992}, which further reduces the computational
cost to $O(Nr^{2})$.

Finally, in Section \ref{sec:apriori}, we show how decaying ranks
can be built directly into the QTT construction in the special case
of bandlimited functions. This section yields an \emph{a priori},
rather than \emph{a posteriori} (cf. Section \ref{sec:rankrevealing}),
guarantee of decaying QTT ranks, nearly matching our corresponding
guarantees for the unfolding matrices.

\subsection{Basic construction \label{sec:basicconstruction}}

In the expression (\ref{eq:tt}), fix the first tensor core $A_{1}=A_{\mathrm{L}}\in\R^{2\times1\times(N+1)}$
as 
\begin{equation}
A_{\mathrm{L}}^{1,\beta}(\sigma)=f\left(\frac{\sigma+c^{\beta}}{2}\right),\quad\beta\in[N+1],\ \sigma\in\{0,1\}.\label{eq:AL}
\end{equation}
 and then for $k=2,\ldots,K-1$, fix the $k$-th tensor core $A_{k}=A\in\R^{2\times(N+1)\times(N+1)}$
as 
\begin{equation}
A^{\alpha\beta}(\sigma):=P^{\alpha}\left(\frac{\sigma+c^{\beta}}{2}\right),\quad\alpha,\beta\in[N+1],\ \sigma\in\{0,1\}.\label{eq:Aint}
\end{equation}
 
\begin{figure}
\centering{}\includegraphics[scale=0.4]{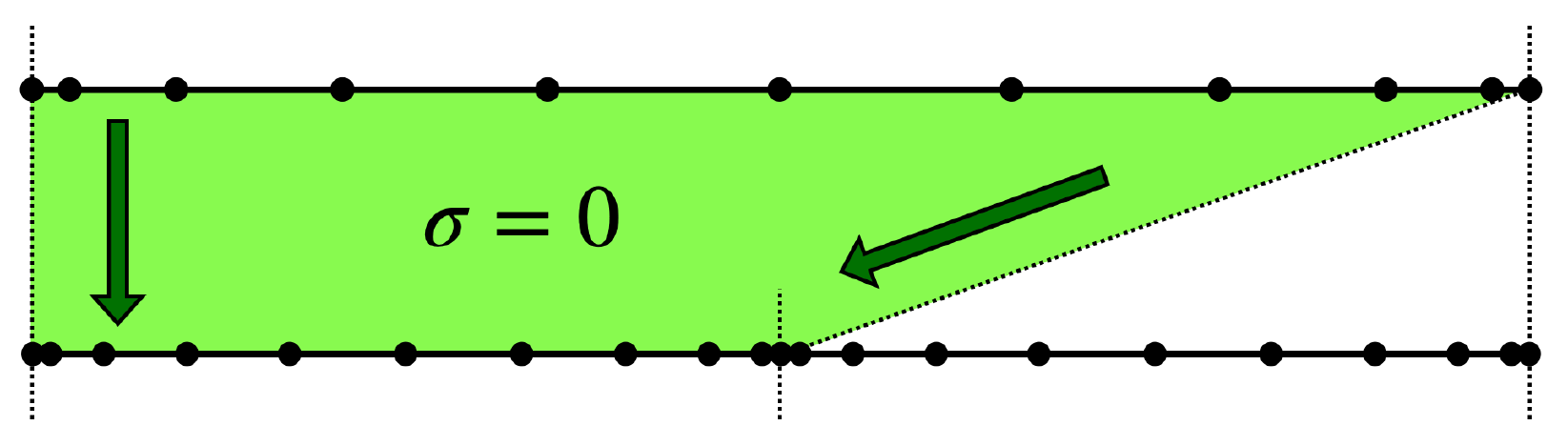}\caption{The core $A$ interpolates values from the Chebyshev-Lobatto grid
$\{x_{\protect\leq m}+2^{-m}c^{\alpha}\}_{\alpha=0}^{N}$ for the
dyadic subinterval $[x_{\protect\leq m},x_{\protect\leq m}+2^{-m}]$,
depicted on top, to the Chebyshev-Lobatto grids $\{x_{\protect\leq m}+2^{-(m+1)}\sigma+2^{-(m+1)}c^{\beta}\}_{\beta=0}^{N}$
for the left $(\sigma=0$) and right $(\sigma=1$) halves of this
dyadic subinterval. The interpolation is indicated graphically in
the figure for the case $\sigma=0$.}
\label{fig:interpcore}
\end{figure}
 Notice that all the tensor cores $k=2,\ldots,K-1$ are exactly the
same. Finally, fix the last tensor core $A_{K}=A_{\mathrm{R}}\in\R^{2\times(N+1)\times1}$
as 
\begin{equation}
A_{\mathrm{R}}^{\alpha,1}(\sigma)=P^{\alpha}\left(\frac{\sigma}{2}\right),\quad\alpha\in[N+1],\ \sigma\in\{0,1\}.\label{eq:AR}
\end{equation}
 The interpretation of the interpolating tensor core $A$ is illustrated
in Figure \ref{fig:interpcore}.

Consider the contraction of the internal tensor core with itself:
\begin{align*}
\left[A\cdot A\right]^{\alpha\beta}(\sigma,\tau):=\sum_{\gamma=0}^{N}A^{\alpha\gamma}(\sigma)A^{\gamma\beta}(\tau) & =\sum_{\gamma=0}^{N}P^{\alpha}\left(\frac{\sigma+c^{\gamma}}{2}\right)P^{\gamma}\left(\frac{\tau+c^{\beta}}{2}\right)\\
 & =\sum_{\gamma=0}^{N}P_{\sigma}^{\alpha}\left(c^{\gamma}\right)P^{\gamma}\left(x_{\tau}^{\beta}\right),
\end{align*}
 where we have defined $P_{\sigma}^{\alpha}:=P^{\alpha}\left(\frac{\sigma+\,\cdot\,}{2}\right)$
and $x_{\tau}^{\beta}:=\frac{\tau+c_{N}^{\beta}}{2}$. The last expression
can be viewed as a Lagrange interpolation formula for the value $P_{\sigma}^{\alpha}\left(x_{\tau}^{\beta}\right)$.
However, since $P_{\sigma}^{\alpha}$ is in fact a polynomial of degree
$N$, the $(N+1)$-point Lagrange interpolation is exact, and we have
the exact identity 
\[
[A\cdot A]^{\alpha\beta}(\sigma,\tau)=P_{\sigma}^{\alpha}(x_{\tau}^{\beta})=P^{\alpha}\left(\frac{\sigma+\frac{\tau+c^{\beta}}{2}}{2}\right)=P^{\alpha}\left(\frac{\sigma}{2}+\frac{\tau}{4}+\frac{c^{\beta}}{4}\right).
\]
 
\begin{figure}
\centering{}\includegraphics[scale=0.5]{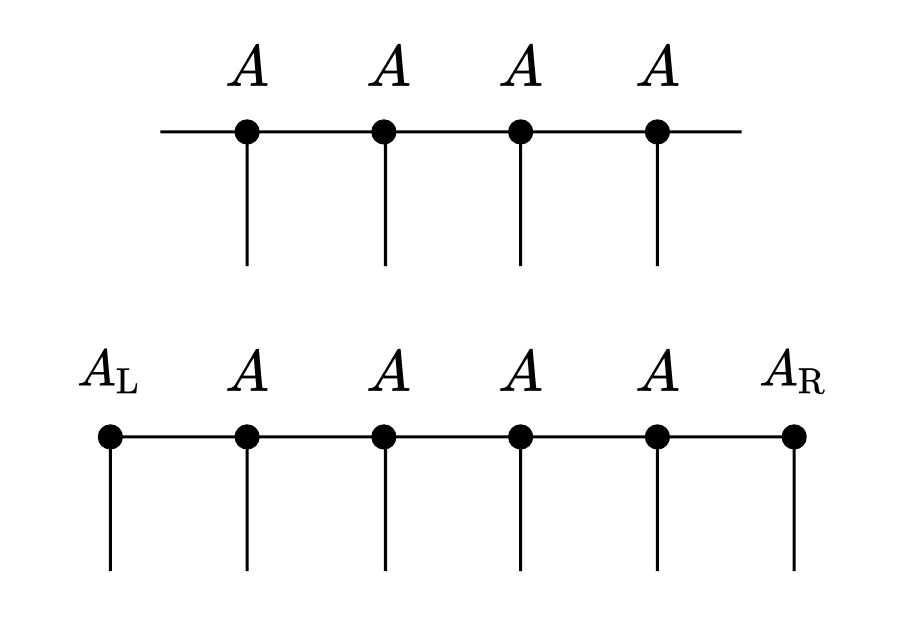}\caption{Top: tensor network diagram for the tensor $A^{p}$, where $p=4$.
Bottom: the full QTT $A_{\mathrm{L}}A^{K-2}A_{\mathrm{R}}$, where
$K=6$.}
\label{fig:interp}
\end{figure}

A straightforward inductive argument generalizes this result for arbitrary
successive contractions of $A^{p}(\sigma_{1:p})=A(\sigma_{1})A(\sigma_{2})\ldots A(\sigma_{p})$
($p$ factors) of $A$ with itself: 
\begin{lem}
\label{lem:chebcores} For any $p\geq1$, the tensor $[A^{p}]^{\alpha\beta}(\sigma_{1:p})$,
depicted graphically in Figure \ref{fig:interp}, satisfies 
\[
[A^{p}]^{\alpha\beta}(\sigma_{1:p})=P^{\alpha}\left(x_{\leq p}+2^{-p}c^{\beta}\right),
\]
 where $x_{\leq p}=\sum_{k=1}^{p}2^{-k}\sigma_{k}$. Moreover, the
tensor $\left[A^{p}A_{\mathrm{R}}\right]^{\alpha,1}(\sigma_{1:p+1})$
satisfies 
\[
[A^{p}A_{\mathrm{R}}]^{\alpha}(\sigma_{1:p+1})=P^{\alpha}\left(x_{\leq p+1}\right),
\]
 where $x_{\leq p+1}=\sum_{k=1}^{p+1}2^{-k}\sigma_{k}$.
\end{lem}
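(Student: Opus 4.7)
The plan is to prove the first identity by induction on $p$, then derive the second identity as a corollary by one more contraction with the boundary core $A_{\mathrm{R}}$.

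For the base case $p=1$, the claim $[A^1]^{\alpha\beta}(\sigma_1) = P^\alpha(x_{\leq 1} + 2^{-1} c^\beta)$ is just the definition (\ref{eq:Aint}) of the core $A$, since $x_{\leq 1} = \sigma_1/2$ and so $P^\alpha((\sigma_1 + c^\beta)/2) = P^\alpha(x_{\leq 1} + 2^{-1} c^\beta)$. For the inductive step, suppose the claim holds for $p$, and consider
\[
[A^{p+1}]^{\alpha\beta}(\sigma_{1:p+1}) = \sum_{\gamma=0}^{N} [A^p]^{\alpha\gamma}(\sigma_{1:p})\, A^{\gamma\beta}(\sigma_{p+1}).
\]
Applying the inductive hypothesis to the first factor and the definition of $A$ to the second yields
\[
[A^{p+1}]^{\alpha\beta}(\sigma_{1:p+1}) = \sum_{\gamma=0}^{N} P^\alpha\!\left(x_{\leq p} + 2^{-p} c^\gamma\right) P^\gamma\!\left(\tfrac{\sigma_{p+1} + c^\beta}{2}\right).
\]
Here the key observation, already illustrated in the excerpt for $p=1$, is to recognize the sum as a Lagrange interpolation formula. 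Specifically, define the univariate polynomial
\[
Q(v) := P^\alpha\!\left(x_{\leq p} + 2^{-p} v\right),
\]
which has degree at most $N$ in $v$. The sum then reads $\sum_\gamma Q(c^\gamma)\, P^\gamma(w)$, evaluated at $w = (\sigma_{p+1}+c^\beta)/2 \in [0,1]$, and since $(N+1)$-point Lagrange interpolation at the Chebyshev-Lobatto nodes $c^0,\dots,c^N$ is exact on polynomials of degree at most $N$, this sum equals $Q(w)$. Substituting back,
\[
[A^{p+1}]^{\alpha\beta}(\sigma_{1:p+1}) = P^\alpha\!\left(x_{\leq p} + 2^{-p}\cdot\tfrac{\sigma_{p+1} + c^\beta}{2}\right) = P^\alpha\!\left(x_{\leq p+1} + 2^{-(p+1)} c^\beta\right),
\]
using $x_{\leq p+1} = x_{\leq p} + 2^{-(p+1)} \sigma_{p+1}$. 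This closes the induction.

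For the second identity, contract with $A_{\mathrm{R}}$ using (\ref{eq:AR}) and the identity just proved:
\[
[A^p A_{\mathrm{R}}]^{\alpha,1}(\sigma_{1:p+1}) = \sum_{\gamma=0}^{N} P^\alpha\!\left(x_{\leq p} + 2^{-p} c^\gamma\right) P^\gamma\!\left(\tfrac{\sigma_{p+1}}{2}\right).
\]
Applying the same exactness-of-Lagrange-interpolation argument to the polynomial $Q$ defined above, now evaluated at $w = \sigma_{p+1}/2$, collapses the sum to $P^\alpha(x_{\leq p} + 2^{-(p+1)}\sigma_{p+1}) = P^\alpha(x_{\leq p+1})$, as claimed.

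The only real content in the argument is the exactness of $(N+1)$-point Lagrange interpolation at the Chebyshev-Lobatto nodes on polynomials of degree $\leq N$; there is no genuine obstacle beyond correctly identifying, at each inductive step, the right univariate polynomial of degree $\leq N$ to which this exactness is applied. Index bookkeeping for $x_{\leq p}$ is the only thing that requires care.
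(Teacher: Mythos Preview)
Your proof is correct and follows essentially the same inductive approach as the paper, which only sketches the argument by working out the $p=2$ case explicitly and then stating that a ``straightforward inductive argument'' yields the general result. Your write-up simply fills in the details the paper omits.
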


\begin{prop}
\label{prop:basicconstructionbound}Let $f:[0,1]\ra\R$, and let $T$
be its quantized tensor representation on $\mathcal{D}_{K}$. Let
$A_{\mathrm{L}}$, $A$, and $A_{\mathrm{R}}$ be tensor cores defined
as in (\ref{eq:AL}), (\ref{eq:Aint}), and (\ref{eq:AR}), respectively.
The matrix product state $S:=A_{\mathrm{L}}A^{K-2}A_{\mathrm{R}}$,
depicted graphically in Figure \ref{fig:interp}, satisfies 
\[
\Vert S-T\Vert_{\infty}\leq E_{1,N}[f].
\]
 In particular, if $f$ is a polynomial of degree at most $N$, then
$S=T$.
\end{prop}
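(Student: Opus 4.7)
The plan is to compute $S(\sigma_{1:K})$ in closed form by applying Lemma \ref{lem:chebcores} to the subnetwork $A^{K-2}A_{\mathrm{R}}$, then recognize the resulting expression as an instance of the interpolation error measured by $E_{1,N}[f]$.

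First I would contract the tail of the network. By Lemma \ref{lem:chebcores} with $p=K-2$, applied to the indices $\sigma_{2:K}$ (which play the role of the lemma's $\sigma_{1:p+1}$), I obtain
\[
[A^{K-2}A_{\mathrm{R}}]^{\alpha}(\sigma_{2:K}) = P^{\alpha}\!\left(\sum_{k=2}^{K} 2^{-(k-1)}\sigma_k\right) = P^{\alpha}(2 x_{>1}),
\]
where $x_{>1} = \sum_{k=2}^K 2^{-k}\sigma_k \in [0,1/2]$ so that $v:=2x_{>1} \in [0,1]$. Contracting this against $A_{\mathrm{L}}^{1,\alpha}(\sigma_1) = f\!\left(\frac{\sigma_1+c^{\alpha}}{2}\right)$ from the definition (\ref{eq:AL}) gives
\[
S(\sigma_{1:K}) = \sum_{\alpha=0}^{N} f\!\left(\tfrac{\sigma_1}{2} + \tfrac{1}{2}c^{\alpha}\right) P^{\alpha}(v).
\]

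Next I would compare this with $T(\sigma_{1:K}) = f(x)$, writing $x = \frac{\sigma_1}{2} + x_{>1} = u + 2^{-1}v$ with $u := \sigma_1/2 \in \{0,1/2\} \subset [0, 1-2^{-1}]$. Then
\[
S(\sigma_{1:K}) - T(\sigma_{1:K}) = \sum_{\alpha=0}^{N} f(u + 2^{-1}c^{\alpha})P^{\alpha}(v) - f(u + 2^{-1}v),
\]
which is precisely the quantity whose absolute value is bounded by $E_{1,N}[f]$ in the definition (\ref{eq:interperror}). Taking the maximum over all $\sigma_{1:K}\in\{0,1\}^K$ yields $\Vert S - T\Vert_{\infty}\leq E_{1,N}[f]$. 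For the polynomial case, $(N+1)$-point Lagrange interpolation at the Chebyshev--Lobatto nodes is exact on polynomials of degree $\leq N$, so $E_{1,N}[f]=0$ and hence $S=T$.

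The only subtle point — and the main bookkeeping hazard — is the relabeling needed to apply Lemma \ref{lem:chebcores} to the subnetwork attached to the second leg of $A_{\mathrm{L}}$: the lemma is stated for indices $\sigma_{1:p+1}$ starting at $1$, whereas here the relevant indices are $\sigma_{2:K}$, introducing the factor of $2$ inside $P^{\alpha}$ that correctly rescales $x_{>1}$ to the interpolation variable $v\in[0,1]$. Once this reindexing is performed, the proof is immediate.
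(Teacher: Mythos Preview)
Your proof is correct and follows essentially the same approach as the paper: apply Lemma~\ref{lem:chebcores} to the tail $A^{K-2}A_{\mathrm{R}}$ (with the index shift $\sigma_{2:K}\mapsto\sigma_{1:K-1}$) to obtain $P^{\alpha}(2x_{>1})$, contract with $A_{\mathrm{L}}$, and identify the result as the level-$1$ Chebyshev interpolation of $f$ so that the error is bounded by $E_{1,N}[f]$. Your explicit handling of the reindexing is in fact slightly cleaner than the paper's presentation.
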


\begin{proof}
Note that by Lemma \ref{lem:chebcores}, 
\begin{align*}
S(\sigma_{1:K}) & =\sum_{\alpha=0}^{N}f\left(\frac{\sigma_{1}+c^{\alpha}}{2}\right)P^{\alpha}\left(\sum_{k=1}^{K-1}2^{-k}\sigma_{k}\right)\\
 & =\sum_{\alpha=0}^{N}f\left(x_{\leq1}+\frac{c^{\alpha}}{2}\right)P^{\alpha}\left(2x_{>1}\right),
\end{align*}
 where $x_{\leq1}=\frac{\sigma_{1}}{2}$ and $x_{>1}=\sum_{k=2}^{K}2^{-k}\sigma_{k}$.
Hence $\vert S(\sigma_{1:K})-T(\sigma_{1:K})\vert\leq E_{1,N}[f]$
for all $\sigma_{1:K}$.
\end{proof}
Note that the construction of $S$ relies only on $2(N+1)$ evaluations
of $f$ via the construction of $A_{\mathrm{L}}$. Meanwhile $A$
and $A_{\mathrm{R}}$ are independent of $f$.

\subsection{Rank-revealing construction \label{sec:rankrevealing}}

The construction from the last section produces a QTT which exactly
matches the Chebyshev-Lobatto interpolation of $f$ on the subintervals
$[0,1/2]$ and $[1/2,1]$. In practice, the numerical TT ranks of
the tensorized representation of $f$ may be smaller than the order
$N$ used for polynomial approximation. (Indeed, we know from Propositions
\ref{prop:diffinterp} and \ref{prop:analyticinterp} that the TT
ranks of the cores should decay as we move from left to right.) In
principle, we can construct a QTT and then use standard MPS/TT compression
algorithms to reveal the true numerical TT ranks. However, the cost
of such compression is $O(N^{3})$.

In order to recover small TT ranks on the fly, we propose a rank-revealing
construction with $O(N^{2}r)$ cost, where $r$ is the true maximum
TT rank. The power of this approach will be further clarified in the
following subsection, where sparse interpolating tensors allow us
to bring the cost down further to $O(Nr^{2})$.

Suppose we have constructed $A_{\mathrm{L}}A^{p-1}$ approximately
as a matrix product state $U_{1}U_{2}\cdots U_{p}R_{p}$, depicted
graphically in Figure \ref{fig:rr}. Here the cores $U_{k}\in\R^{2\times r_{k-1}\times r_{k}}$,
indexed as $U_{k}^{\alpha\beta}(\sigma)$, have orthonormal columns
when viewed as $2r_{k-1}\times r_{k}$ matrices with respect to the
index reshaping $(\sigma\alpha,\beta)$. Meanwhile, $R_{p}\in\R^{r_{k}\times(N+1)}$
is simply a matrix.

The first decomposition $A_{\mathrm{L}}=U_{1}R_{1}$ can be obtained
by a QR decomposition of a suitable reshaping of $A_{\mathrm{L}}$.
In general, we will allow for some truncation of singular values to
reveal a bond dimension possible much smaller than $N$.

Inductively, given 
\[
A_{\mathrm{L}}A^{p-1}\approx U_{1}\cdots U_{p}R_{p},
\]
 we will obtain $U_{p+1}$ and $R_{p+1}$ such that $A_{\mathrm{L}}A^{p}\approx U_{1}\cdots U_{p+1}R_{p+1}$
as follows. First perform the contraction 
\begin{equation}
B_{p}=R_{p}A\in\R^{2\times r_{k}\times(N+1)},\quad B_{p}^{\alpha\beta}(\sigma)=\sum_{\gamma=0}^{r_{k}-1}R_{p}^{\alpha\gamma}A^{\gamma\beta}(\sigma)\label{eq:Bp}
\end{equation}
 to define a new tensor $B_{p}$. Then we can view $B_{p}$ as a $2r_{k}\times(N+1)$
matrix and perform a truncated SVD with truncation rank $r_{k+1}$
\[
B_{p}\approx U_{p+1}\Sigma_{p+1}V_{p+1}^{\top},\quad U_{p+1}\in\R^{2r_{k}\times r_{k+1}},\quad\Sigma_{p+1}\in\R^{r_{k+1}\times r_{k+1}},\quad V_{p+1}\in\R^{(N+1)\times r_{k+1}}.
\]
 Then $U_{p+1}$, viewed as a $2\times r_{k-1}\times r_{k}$ tensor,
defines our next core, and the contraction $R_{p+1}=\Sigma_{p+1}V_{p+1}^{\top}$,
which is an $r_{k+1}\times(N+1)$ matrix, completes the inductive
construction.

\begin{figure}
\centering{}\includegraphics[scale=0.5]{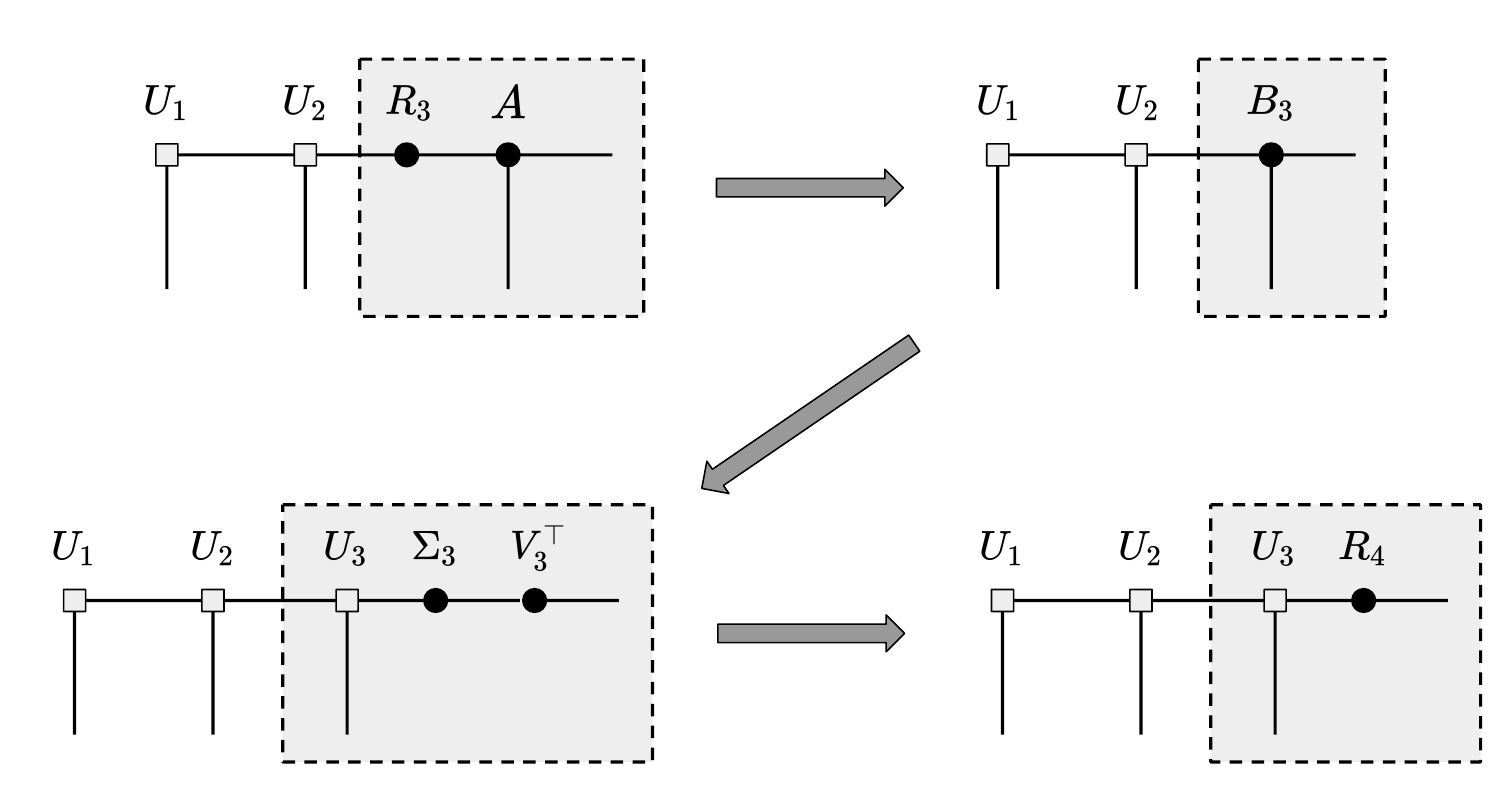}\caption{Illustration of the rank-revealing interpolative construction of QTT.}
\label{fig:rr}
\end{figure}

The steps of the construction for each level are depicted in Figure
\ref{fig:rr}. In the last level (not depicted), we simply merge the
outpute $R_{K}$ of the preceding step with the remaining tensor core
$A_{\mathrm{R}}$, without any further truncation. For every $m=1,\ldots,K-1$,
let $S_{\leq m}$ denote the tensor at the $m$-th stage of the MPS
construction. Hence $S_{\leq m}$ can be viewed as an element of $\R^{2^{m}\times(N+1)}$.
In Figure \ref{fig:rr}, for example, the last diagram depicts $S_{\leq3}$.

The following result bounds the error incurred by the SVD truncations.
\begin{thm}
\label{thm:rr} Let $\ve>0$. In the above construction, let each
SVD truncation rank $r_{k}$ be chosen as small as possible such that
the Frobenius norm error of the truncation is at most $\ve\sqrt{2^{k}}$.
Let $S$ denote the MPS that is furnished by the construction, and
let $T$ denote the true quantized tensor representation of the target
function $f$. Then the total error is bounded as 
\[
\Vert T-S\Vert_{2}\leq E_{1,N}[f]+(K-2)\,\Lambda_{N}\,\ve,
\]
 where $\Lambda_{N}$ is the Lebesgue constant \cite{TrefethenBook2019}
of $(N+1)$-point Chebyshev-Lobatto interpolation, which is in particular
bounded by 
\[
\Lambda_{N}\leq1+\frac{2}{\pi}\log(N+1).
\]
\end{thm}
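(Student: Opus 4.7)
My argument would proceed via a triangle inequality combined with a telescoping decomposition of the truncation error. First I would write $\Vert T - S\Vert_2 \leq \Vert T - S^\star\Vert_2 + \Vert S^\star - S\Vert_2$, where $S^\star := A_{\mathrm{L}} A^{K-2} A_{\mathrm{R}}$ is the exact interpolative MPS of Proposition \ref{prop:basicconstructionbound}. The first term is at most $E_{1,N}[f]$ by that proposition together with \eqref{eq:normineq}.

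For the second term, the only approximation errors enter through the $K-2$ truncated SVDs at steps $p = 2, \ldots, K-1$ (the initial QR yielding $A_{\mathrm{L}} = U_1 R_1$ is exact, and the final merge with $A_{\mathrm{R}}$ introduces no truncation). I would introduce intermediate tensors $S^{(p)}$ in which the leftmost $p$ cores have been replaced by $U_1, \ldots, U_p$ (with $R_p$ on the right of $U_p$) while the remaining cores are still the exact $A$'s and $A_{\mathrm{R}}$, so that $S^{(1)} = S^\star$ and $S^{(K-1)} = S$. The telescoping increments satisfy
\[
S^{(p-1)} - S^{(p)} = U_1 \cdots U_{p-1} \, \Delta_p \, A^{K-p-1} A_{\mathrm{R}}, \qquad \Delta_p := B_{p-1} - U_p \Sigma_p V_p^\top,
\]
with $\Vert \Delta_p\Vert_F \leq \varepsilon \sqrt{2^p}$ by hypothesis on the truncation at step $p$.

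Two estimates drive the bound on each increment. First, the product $U_1 \cdots U_{p-1}$, viewed as a $2^{p-1}\times r_{p-1}$ matrix, has orthonormal columns (by induction, since each core $U_k$ does when reshaped as a $2r_{k-1}\times r_k$ matrix), so left-multiplication by it preserves Frobenius norm. Second, Lemma \ref{lem:chebcores} identifies $M := A^{K-p-1} A_{\mathrm{R}}$, of size $(N+1) \times 2^{K-p}$, with the Lagrange interpolation matrix $M_{\alpha,\sigma} = P^\alpha(y_\sigma)$, where $\{y_\sigma\}$ are the $2^{K-p}$ dyadic grid points. The defining property $\max_y \sum_\alpha |P^\alpha(y)| \leq \Lambda_N$ of the Lebesgue constant gives $\Vert M^\top v\Vert_\infty \leq \Lambda_N \Vert v\Vert_\infty \leq \Lambda_N \Vert v\Vert_2$ for any $v \in \R^{N+1}$, so the $\ell^\infty \to \ell^2$ conversion on the $2^{K-p}$-dimensional side yields $\Vert M\Vert \leq \sqrt{2^{K-p}}\, \Lambda_N$.

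Combining these estimates, the Frobenius norm of each telescoping increment is at most $\varepsilon \sqrt{2^p} \cdot \sqrt{2^{K-p}}\, \Lambda_N = \varepsilon \sqrt{2^K}\, \Lambda_N$, which, after the $1/\sqrt{2^K}$ normalization in \eqref{eq:tensornorms}, becomes $\Vert S^{(p-1)} - S^{(p)}\Vert_2 \leq \Lambda_N \varepsilon$. Summing over the $K-2$ truncation steps yields $\Vert S^\star - S\Vert_2 \leq (K-2)\,\Lambda_N \varepsilon$, and the classical bound $\Lambda_N \leq 1 + (2/\pi)\log(N+1)$ on the Chebyshev--Lobatto Lebesgue constant completes the theorem. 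The principal obstacle is the operator-norm bound for the Lagrange interpolation matrix via the Lebesgue constant; the remaining technical care is in bookkeeping the three scale factors $\sqrt{2^p}$, $\sqrt{2^{K-p}}$, and $1/\sqrt{2^K}$ so that they combine cleanly and the per-step error is independent of both $p$ and $K$.
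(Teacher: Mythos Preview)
Your proposal is correct and follows essentially the same route as the paper: a telescoping decomposition over the $K-2$ truncation steps, with each increment bounded via the isometry of $U_1\cdots U_{p-1}$ on the left and the Lebesgue-constant control of the interpolation tail $A^{K-p-1}A_{\mathrm{R}}$ on the right (identified via Lemma~\ref{lem:chebcores}). The only cosmetic differences are that you split off $\Vert T-S^\star\Vert_2$ at the outset (the paper absorbs it into the base case $\ve_1$) and that you bound the tail via its operator norm rather than its Frobenius norm; both routes yield the identical factor $\sqrt{2^{K-p}}\,\Lambda_N$.
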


\begin{rem}
From the point of view of MPS/TT compression it is surprising that
we can compress our tensor network at every stage of the construction
because the tail $A^{p}A_{\mathrm{R}}$ of tensor cores yet to be
added is not `isometric,' i.e., may amplify the compression error
that we make in the leading tensor indices. However, the fact that
this tail of tensor cores implements Chebyshev interpolation still
allows us to control the error amplification.
\end{rem}

\begin{proof}
We want to bound the error incurred by the successive SVD truncations.
We will show that at every stage of the construction $S_{\leq m}A^{K-m-1}A_{\mathrm{R}}$
remains close to the target $T$. Accordingly, let $\ve_{m}$ denote
the approximation error 
\[
\ve_{m}:=\Vert S_{\leq m}A^{K-m-1}A_{\mathrm{R}}-T\Vert_{\mathrm{F}}
\]
 for $m=1,\ldots,K-1$. Note that $S=S_{\leq K-1}A_{\mathrm{R}}$
is our final QTT, so $\ve_{K-1}$ is the total Frobenius norm error
of the construction. Meanwhile $S_{\leq1}=A_{\mathrm{L}}$, so $\ve_{1}$
is the Frobenius norm error of the uncompressed construction $A_{\mathrm{L}}A^{K-2}A_{\mathrm{R}}$,
which is bounded by Proposition \ref{prop:basicconstructionbound}
as 
\begin{equation}
\ve_{1}\leq E_{1,N}[f]\,\sqrt{2^{K}}.\label{eq:eps1}
\end{equation}

Now to bound $\ve_{m}$ inductively, compute: 
\begin{align*}
\ve_{m+1} & =\Vert T-S_{\leq m+1}A^{K-m-2}A_{\mathrm{R}}\Vert_{\mathrm{F}}\\
 & \leq\Vert T-S_{\leq m}A^{K-m-1}A_{\mathrm{R}}\Vert_{\mathrm{F}}+\Vert S_{\leq m+1}A^{K-m-2}A_{\mathrm{R}}-S_{\leq m}A^{K-m-1}A_{\mathrm{R}}\Vert_{\mathrm{F}}\\
 & =\ve_{m}+\Vert(S_{\leq m+1}-S_{\leq m}A)A^{K-m-2}A_{\mathrm{R}}\Vert_{\mathrm{F}}\\
 & \leq\ve_{m}+\Vert S_{\leq m+1}-S_{\leq m}A\Vert_{\mathrm{F}}\,\Vert A^{K-m-2}A_{\mathrm{R}}\Vert_{\mathrm{F}},
\end{align*}
 where we have used submultiplicativity of the Frobenius norm, applied
to the product $(S_{\leq m+1}-S_{\leq m}A)A^{K-m-2}A_{\mathrm{R}}$,
viewed suitably as a matrix-matrix product. Note that $\Vert S_{\leq m+1}-S_{\leq m}A\Vert_{\mathrm{F}}\leq\ve\sqrt{2^{m+1}}$,
precisely by construction. Therefore 
\begin{equation}
\ve_{m+1}\leq\ve_{m}+\ve\sqrt{2^{m+1}}\,\Vert A^{K-m-2}A_{\mathrm{R}}\Vert_{\mathrm{F}},\label{eq:epsinductive}
\end{equation}
 and it remains to bound $\Vert A^{K-m-2}A_{\mathrm{R}}\Vert_{\mathrm{F}}$. 

We introduce the shorthand notation $B=A^{K-m-2}A_{\mathrm{R}}$ and
view 
\[
B^{\alpha}(\sigma_{m+2:K}):=[A^{K-m-2}A_{\mathrm{R}}]^{\alpha}(\sigma_{m+2:K})
\]
 as a $2^{K-m-1}\times(N+1)$ matrix. By Lemma \ref{lem:chebcores},
we have that 
\[
B^{\alpha}(\sigma_{m+2:K})=P^{\alpha}\left(2^{m+1}\sum_{k=m+2}^{K}2^{-k}\sigma_{k}\right).
\]
 For further shorthand, we can write $\bsig=\sigma_{m+2:K}\in\{0,1\}^{K-m-1}$
and write $B^{\alpha}(\bsig)=P^{\alpha}(x(\bsig))$, where $x(\bsig)\in[0,1]$
is suitably defined as a function of $\bsig\in\{0,1\}^{K-m-1}$.

Then 
\begin{align*}
\Vert B\Vert_{\mathrm{F}}^{2} & =\sum_{\bsig\in\{0,1\}^{K-m-1}}\sum_{\alpha=0}^{N+1}\vert B^{\alpha}(\bsig)\vert^{2}\\
 & \leq\sum_{\bsig\in\{0,1\}^{K-m-1}}\left(\sum_{\alpha=0}^{N+1}\vert B^{\alpha}(\bsig)\vert\right)^{2}\\
 & \leq2^{K-m-1}\left(\sup_{x\in[0,1]}\sum_{\alpha=0}^{N+1}\vert P^{\alpha}(x)\vert\right)^{2}.
\end{align*}
 In fact, the quantity 
\[
\Lambda_{N}:=\sup_{x\in[0,1]}\sum_{\alpha=0}^{N+1}\vert P^{\alpha}(x)\vert
\]
 is called the \textbf{\emph{Lebesgue constant }}of the polynomial
interpolation scheme, and for the Chebyshev-Lobatto points it is known
\cite{TrefethenBook2019} that 
\[
\Lambda_{N}\leq1+\frac{2}{\pi}\log(N+1).
\]
 In summary, 
\[
\Vert A^{K-m-2}A_{\mathrm{R}}\Vert_{\mathrm{F}}=\Vert B\Vert_{\mathrm{F}}\leq\Lambda_{N}\sqrt{2^{K-m-1}}.
\]

Combining with (\ref{eq:epsinductive}), it follows that 
\[
\ve_{m+1}\leq\ve_{m}+\Lambda_{N}\,\ve\sqrt{2^{K}}.
\]
 Then from our base bound (\ref{eq:eps1}) of $\ve_{1}$, we deduce
that 
\[
\ve_{K-1}\leq\sqrt{2^{K}}\left[E_{1,N}[f]+(K-2)\,\Lambda_{N}\,\ve\right].
\]
 Now $\Vert T-S\Vert_{2}=\sqrt{2^{-K}}\,\Vert T-S\Vert_{\mathrm{F}}=\sqrt{2^{-K}}\,\ve_{K-1}$,
so we conclude that 
\[
\Vert T-S\Vert_{2}\leq E_{1,N}[f]+(K-2)\,\Lambda_{N}\,\ve,
\]
 as was to be shown.
\end{proof}

\subsection{Sparse interpolative construction \label{sec:sparse}}

As stated above our goal here is to improve the runtime of the interpolative
construction to $O(Nr^{2})$. Observe that the $\Theta(N^{2}r)$ bottleneck
in the rank-revealing construction is the construction of each core
$B_{p}$ following (\ref{eq:Bp}), which requires us to perform $2(N+1)^{2}$
sums involving up to $r$ terms each, i.e., consumes $\Theta(N^{2}r)$
runtime. If the matrices $A(\sigma)$ were \emph{sparse} with $O(1)$
nonzero entries per column, then the runtime would drop to $O(Nr^{2})$
as desired.

In fact, it is possible to construct sparse approximate Chebyshev
interpolation matrices, up to a high order of accuracy. Our construction
follows \cite{BOYD1992}, and we will review the details of the construction.

Consider a function $g:[0,1]\ra\R$. For fixed $x$, we will approximate
$g(x)$ using local Lagrange interpolation on nearby Chebyshev-Lobatto
nodes. As the number of local nodes used is increased, we converge
stably to full Chebyshev interpolation. However, if we fix the number
of local nodes and increase the underlying grid size $N$, we can
obtain rapid convergence as $N$ is increased while requiring only
sparse interpolation matrices.

It is more natural and effective to perform local Lagrange interpolations
with respect to the angular coordinate $\theta\in[0,\pi]$, related
to $x$ via $x=x(\theta)=\frac{\cos(\theta)+1}{2}$. The inverse map
is denoted $\theta=\theta(x)$. Under this correspondence, the Chebyshev-Lobatto
grid $c^{\alpha}$ is identified with an equispaced angular grid $\theta^{\alpha}=\frac{\alpha}{N}\pi$,
$\alpha=0,\ldots,N$. To perform interpolation for a function $h(\cos(\theta))$,
defined for $\theta\in[0,\pi]$ in a way that avoids boundary effects,
it is useful to extend the function to the domain $\theta\in[-\pi,2\pi]$.
Then one considers an extended angular grid $\theta^{\alpha}=\frac{\alpha}{N}\pi$,
$\alpha=-N,\ldots,2N$. For $\gamma\in\{0,\ldots,N\}$, this extension
yields the identifications $-\gamma\sim\gamma$ and $N+\gamma\sim N-\gamma$.
For any $\alpha\in\{-N,\ldots,2N\}$, we let $\llbracket\alpha\rrbracket$
denote the unique representative of $\alpha$ in $\{0,\ldots,N\}$
up to this equivalence.

Now for every $\theta\in[0,\pi]$, let $\iota(\theta)$ denote the
index of the closest angular grid point in $\{\theta^{\alpha}\}$.
We let $M\leq N$ denote a hyperparameter which determines the order
of the local Lagrange interpolation. Then we approximate $g(x)$ with
its local Lagrange interpolation in the angular coordinate using interpolation
points $\theta^{\gamma}$, where $\gamma=\iota(\theta)-M,\ldots,\iota(\theta)+M$.
Concretely, we approximate: 
\begin{equation}
g(x)\approx\sum_{\gamma=\iota(\theta(x))-M}^{\iota(\theta(x))+M}g(c^{\llbracket\gamma\rrbracket})\,L^{\gamma}(\theta(x)),\label{eq:locallagrange}
\end{equation}
 where 
\[
L^{\gamma}(\theta)=\prod_{\beta\in\{-M,\ldots,M\}\backslash\{\gamma\}}\frac{\theta-\theta^{\beta}}{\theta^{\gamma}-\theta^{\beta}}
\]
 are the Lagrange basis functions.

We can think of the right-hand side of (\ref{eq:locallagrange}) as
defining a linear operator $\mathcal{I}=\mathcal{I}_{M,N}$ on the
space of functions $[0,1]\ra\R$ which sends $g$ to its interpolation
$\mathcal{I}g$.

If we assume that $g$ can be approximated by an ordinary Chebyshev
interpolation as 
\[
g\approx\sum_{\alpha=0}^{N}g(c^{\alpha})\,P^{\alpha},
\]
 then 
\begin{equation}
g\approx\mathcal{I}g\approx\sum_{\alpha=0}^{N}g(c^{\alpha})\,\mathcal{I}P^{\alpha},\label{eq:boydinterp}
\end{equation}
 so we are motivated to compute 
\begin{equation}
\mathcal{I}P^{\alpha}(x)=\sum_{\gamma=\iota(\theta(x))-M}^{\iota(\theta(x))+M}\delta^{\alpha,\llbracket\gamma\rrbracket}\,L^{\gamma}(\theta(x)),\label{eq:IPA}
\end{equation}
 where we have used the fact that $P^{\alpha}(c^{\beta})=\delta^{\alpha,\beta}$.

Recall moreover that in the basic construction of Section \ref{sec:basicconstruction},
we were interested in using evaluations $g(c^{\alpha})$ to compute
the values $g\left(\frac{\sigma+c^{\beta}}{2}\right)$, for $\sigma\in\{0,1\}$,
which we achieved via $g\left(\frac{\sigma+c^{\beta}}{2}\right)=\sum_{\alpha=0}^{N}g(c^{\alpha})A^{\alpha\beta}(\sigma)$.
But from (\ref{eq:boydinterp}), we have that 
\[
g\left(\frac{\sigma+c^{\beta}}{2}\right)\approx\sum_{\alpha=0}^{N}g(c^{\alpha})\,\tilde{A}^{\alpha\beta}(\sigma),
\]
 where the tensor core $\tilde{A}\in\R^{2\times(N+1)\times(N+1)}$
is defined by 
\begin{equation}
\tilde{A}^{\alpha\beta}(\sigma)=\mathcal{I}P^{\alpha}\left(\frac{\sigma+c^{\beta}}{2}\right),\label{eq:Atilde}
\end{equation}
 where $\mathcal{I}P^{\alpha}$ is defined as in (\ref{eq:IPA}).

Observe that $\tilde{A}^{\alpha\beta}(\sigma)$ is nonzero only if
\[
\left|\pi^{-1}\theta\left(\frac{\sigma+c^{\beta}}{2}\right)-\alpha\right|\leq M+1.
\]
 It follows that each column of $\tilde{A}(\sigma)$ has $O(M)$ nonzero
entries as $N\ra\infty$.

Then our sparse interpolative construction is achieved by replacing
the tensor core $A$ defined in Section \ref{sec:basicconstruction}
with $\tilde{A}$ defined by (\ref{eq:Atilde}), which depends on
the additional parameter $M$ controlling the order of local interpolation.
Then the rank-revealing construction of Section \ref{sec:rankrevealing}
is applied with $\tilde{A}$ in the place of $A$.

\subsection{\emph{A priori }decaying-rank construction \label{sec:apriori}}

Recall that Proposition \ref{prop:basicconstructionbound} guarantees
that our basic construction of Section \ref{sec:basicconstruction}
is accurate. Note that in this construction, all of the QTT ranks
are of the same size (i.e., $N+1$). Meanwhile, Theorem \ref{thm:rr}
guarantees that truncating the ranks `on-the-fly' to specified tolerance
is safe, in the sense that the error of the full QTT is controlled
by this tolerance. However, this \emph{a posteriori }bound does not
directly imply an \emph{a priori }guarantee that the decaying ranks
of the unfolding matrices, proved in Section \ref{sec:decaying},
will be revealed.

Ideally, we would like an \emph{a priori }construction with decaying
ranks, matching the rank guarantees for the unfolding matrices. For
simplicity we will focus on the bandlimited case, where we have an
intuition that the ideal ranks should decrease by roughly a factor
of $2$ at each level of the QTT. Therefore we should consider interpolating
grids that decay in size by roughly a factor of $2$ at each level.

Concretely, consider $f$ which is $\Omega$-bandlimited in the sense
of Definition \ref{def:bandlimited}. Then define grid sizes 
\[
N_{k}=\left\lceil 2^{-k}\Omega+\Delta\right\rceil ,\quad k=1,\ldots,K,
\]
 where $\Delta>0$ is a free parameter, the choice of which we will
discuss later.

Then we define interpolating cores $A_{k}\in\R^{2\times N_{k-1}\times N_{k}}$
for $k=2,\ldots,K$: 
\begin{equation}
A_{k}^{\alpha\beta}(\sigma):=P_{N_{k-1}}^{\alpha}\left(\frac{\sigma+c_{N_{k}}^{\beta}}{2}\right),\quad\alpha\in[N_{k-1}+1],\ \beta\in[N_{k}+1],\ \sigma\in\{0,1\}.\label{eq:Aint-1}
\end{equation}
 Following the shorthand adopted in preceding sections, our QTT will
be defined in terms of its cores as 
\begin{equation}
S=A_{\mathrm{L}}A_{2},\ldots,A_{K-1}A_{\mathrm{R}},\label{eq:Sdecay}
\end{equation}
 where $A_{\mathrm{L}}$ is defined as in (\ref{eq:AL}) with $N=N_{1}$
and $A_{\mathrm{R}}$ is defined by $A_{\mathrm{R}}^{\alpha}(\sigma):=A_{K}^{\alpha0}(\sigma)$.
\begin{thm}
Let $S$ denote the MPS (\ref{eq:Sdecay}) that is furnished by the
above construction, and let $T$ denote the true quantized tensor
representation of the target function $f:[0,1]\ra\R$, which we assume
to be the restriction of an $\Omega$-bandlimited function with spectral
measure $\mu$. The construction error is bounded as 
\[
\Vert T-S\Vert_{\infty}\leq\frac{2}{\pi}\vert\mu\vert(K-1)\left[1+\frac{2}{\pi}\log(\lceil\Omega/2+\Delta\rceil+1)\right]^{K-2}e^{-\Delta/2}.
\]
\end{thm}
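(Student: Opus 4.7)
The plan is to adapt the single-step argument of Proposition~\ref{prop:basicconstructionbound} to the multilevel setting and track how the Chebyshev--Lobatto interpolation errors at each scale accumulate as they propagate through the QTT. For $k=1,\ldots,K-1$, I would introduce the partial contraction $S_k := A_{\mathrm L}A_2\cdots A_k$, indexed as $S_k^\beta(\sigma_{1:k})$, and its exact counterpart $\tilde{S}_k^\beta(\sigma_{1:k}) := f\bigl(x_{\leq k}+2^{-k}c_{N_k}^\beta\bigr)$, which records the true values of $f$ at the Chebyshev--Lobatto grid of size $N_k+1$ inside each dyadic subinterval of width $2^{-k}$. By the definition \eqref{eq:AL} of $A_{\mathrm L}$, the base case $S_1=\tilde{S}_1$ holds exactly.

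The key step is an error recursion for $e_k := \Vert S_k-\tilde{S}_k\Vert_\infty$. Generalizing Lemma~\ref{lem:chebcores}, I would observe that substituting the exact values $\tilde{S}_{k-1}$ into the core $A_k$ produces
\[
\sum_{\alpha}\tilde{S}_{k-1}^\alpha(\sigma_{1:k-1})\,P_{N_{k-1}}^\alpha\bigl((\sigma_k+c_{N_k}^\beta)/2\bigr),
\]
which is precisely the degree-$N_{k-1}$ Chebyshev--Lobatto interpolant of $f$ on the parent subinterval $[x_{\leq k-1},\,x_{\leq k-1}+2^{-(k-1)}]$, evaluated at the child node $x_{\leq k}+2^{-k}c_{N_k}^\beta$. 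By the definition \eqref{eq:interperror}, this interpolant differs from $\tilde{S}_k^\beta(\sigma_{1:k})$ by at most $E_{k-1,N_{k-1}}[f]$. Combining the triangle inequality with the standard Lebesgue-constant bound $\sum_\alpha |P_{N_{k-1}}^\alpha(x)|\leq \Lambda_{N_{k-1}}$ for $x\in[0,1]$ yields
\[
e_k \,\leq\, \Lambda_{N_{k-1}}\,e_{k-1} \,+\, E_{k-1,N_{k-1}}[f].
\]
An entirely analogous bound handles the final contraction with $A_{\mathrm R}$, which is a slice of the same interpolating core and evaluates at the leaf point identified with $T(\sigma_{1:K})$; this contributes one additional round of $\Lambda_{N_{K-1}}$ and $E_{K-1,N_{K-1}}[f]$.

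Unrolling the recursion with base case $e_1=0$ gives
\[
\Vert S-T\Vert_\infty \,\leq\, \sum_{j=1}^{K-1} E_{j,N_j}[f]\,\prod_{\ell=j+1}^{K-1}\Lambda_{N_\ell}.
\]
To conclude, I would plug in the two quantitative inputs. The choice $N_j=\lceil 2^{-j}\Omega+\Delta\rceil\geq 2^{-j}\Omega+\Delta$, combined with Proposition~\ref{prop:bandlimitedinterp}, gives the uniform bound $E_{j,N_j}[f]\leq (2|\mu|/\pi)\,e^{-\Delta/2}$ for all $j$. Since the sequence $(N_k)$ is monotone decreasing, each Lebesgue constant satisfies $\Lambda_{N_\ell}\leq \Lambda_{N_1}\leq 1+(2/\pi)\log(\lceil\Omega/2+\Delta\rceil+1)$. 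Bounding the sum crudely by its $K-1$ terms, each a product of at most $K-2$ Lebesgue constants, recovers the stated bound.

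The main obstacle is cleanly verifying the multilevel interpolation identity that plays the role of Lemma~\ref{lem:chebcores} when adjacent cores have mismatched bond dimensions. The essential calculation is the change of variables $x=x_{\leq k-1}+2^{-(k-1)}u$, which maps the parent subinterval's normalized coordinate $u\in[0,1]$ onto the $\sigma_k$-th half of the child subinterval; under this change of variables, the action of $A_k$ on exact Chebyshev--Lobatto samples at the parent scale reproduces the values of the parent-scale Chebyshev interpolant at the child Chebyshev--Lobatto nodes. Once this identity is in hand, the remainder of the argument is routine triangle-inequality bookkeeping.
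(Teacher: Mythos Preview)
Your proposal is correct and follows essentially the same approach as the paper: the paper likewise introduces the partial contractions $S_{\leq m}$ and exact targets $T_{\leq m}$ (your $\tilde S_m$), derives the identical recursion $\ve_{m+1}\leq\Lambda_{N_m}\ve_m+E_{m,N_m}[f]$ via the Lebesgue constant and the interpolation error, invokes Proposition~\ref{prop:bandlimitedinterp} to bound each $E_{m,N_m}[f]$ by $(2|\mu|/\pi)e^{-\Delta/2}$, and unrolls to obtain $\ve_K\leq b(K-1)a^{K-2}$. The only cosmetic differences are that the paper bounds $\Lambda_{N_m}$ directly via $\Lambda_M\leq 1+(2/\pi)\log(M+1)$ with $M=N_1$ rather than invoking monotonicity of $\Lambda_N$, and it solves the linear recursion $\ve_{m+1}\leq a\ve_m+b$ in closed form rather than writing out the telescoped sum.
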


\begin{rem}
The factor in the bound with the $K$-dependent exponent results from
repeated error amplification by the Lebesgue constant of the interpolating
cores. In practice, this aspect of the bound seems not to be sharp,
though it appears to be difficult to rule out such a dependence. Nonetheless,
this factor can be suppressed efficiently in the bound by only a modest
increase of $\Delta$ with mild dependence on the depth $K$.
\end{rem}

\begin{rem}
Note, following the same reasoning as in the proof of Corollary \ref{cor:sqrt},
that the largest QTT rank of this construction is bounded by $\lceil\sqrt{\Omega}+\Delta\rceil+1$,
since the leading ranks $k=1,\ldots,\lfloor\frac{1}{2}\log_{2}\Omega\rfloor$
are trivially bounded by $\sqrt{\Omega}$ (the number of rows in the
unfolding matrix) and the ranks $k\geq\lceil\frac{1}{2}\log_{2}\Omega\rceil$
are bounded by $\lceil\sqrt{\Omega}+\Delta\rceil+1$.
\end{rem}

\begin{proof}
As in Section \ref{sec:rankrevealing}, let $S_{\leq m}=A_{\mathrm{L}}A_{2}\cdots A_{m}$
denote our QTT at the $m$-th stage of construction. Let 
\[
\ve_{m}:=\Vert S_{\leq m}-T_{\leq m}\Vert_{\infty},
\]
 where $T_{\leq m}$ is defined by 
\[
T_{\leq m}^{\alpha}(\sigma_{1:m})=f\left(\sum_{k=1}^{m}2^{-k}\sigma_{k}+2^{-m}c_{N_{m}}^{\alpha}\right).
\]
 Note that then in particular $\Vert T-S\Vert_{\infty}\leq\ve_{K}$,
so we seek a bound on $\ve_{K}$, which we build inductively.

Compute: 
\begin{align*}
\ve_{m+1} & =\Vert S_{\leq m}A_{m+1}-T_{\leq m+1}\Vert_{\infty}\\
 & \leq\Vert(S_{\leq m}-T_{\leq m})A_{m+1}\Vert_{\infty}+\Vert T_{\leq m}A_{m+1}-T_{\leq m+1}\Vert_{\infty}\\
 & \leq\Lambda_{N_{m}}\underbrace{\Vert S_{\leq m}-T_{\leq m}\Vert_{\infty}}_{=\,\ve_{m}}+E_{m,N_{m}}[f],
\end{align*}
 where as in the proof of Theorem \ref{thm:rr}, $\Lambda_{N_{m}}$
denotes the Lebesgue constant for $(N_{m}+1)$-point Chebyshev-Lobatto
interpolation. To proceed from the penultimate line to the last line,
we used (1) the fact that the Lebesgue constant bounds the $\infty$-operator
norm of polynomial interpolation and (2) the fact that the error $T_{\leq m}A_{m}-T_{\leq m+1}$
is a Chebyshev interpolation error precisely in the sense of (\ref{eq:interperror}).

It follows then, via Proposition \ref{prop:bandlimitedinterp}, that
\[
\ve_{m+1}\leq\Lambda_{N_{m}}\ve_{m}+\frac{2\vert\mu\vert}{\pi}e^{\frac{1}{2}(2^{-m}\Omega-N_{m})}.
\]
 Since $N_{m}\ge2^{-m}\Omega+\Delta$ by definition, we have 
\[
\ve_{m+1}\leq\Lambda_{N_{m}}\ve_{m}+\frac{2\vert\mu\vert}{\pi}e^{-\frac{\Delta}{2}}.
\]
 Then recalling from \cite{TrefethenBook2019} that 
\[
\Lambda_{M}\leq1+\frac{2}{\pi}\log(M+1)
\]
 for all $M$, we have 
\begin{equation}
\ve_{m+1}\leq a\ve_{m}+b,\label{eq:epsrecurse}
\end{equation}
 where $a:=1+\frac{2}{\pi}\log(\lceil\Omega/2+\Delta\rceil+1)$ and
$b:=\frac{2\vert\mu\vert}{\pi}e^{-\Delta/2}$.

From (\ref{eq:epsrecurse}), together with the base case $\ve_{1}=0$,
it follows that 
\[
\ve_{K}\leq b\sum_{k=0}^{K-2}a^{k}\leq b(K-1)a^{K-2},
\]
 i.e., 
\[
\ve_{K}\leq\frac{2}{\pi}\vert\mu\vert(K-1)\left[1+\frac{2}{\pi}\log(\lceil\Omega/2+\Delta\rceil+1)\right]^{K-2}e^{-\Delta/2}.
\]
 Since $\Vert T-S\Vert_{\infty}\leq\ve_{K}$, this implies the desired
bound on $\Vert T-S\Vert_{\infty}$.
\end{proof}

\section{Inverting the construction \label{sec:invert}}

Note that the tensor $S_{\leq m}$ at stage $m$ of our construction
(whether or not sparse interpolation and/or SVD truncation are applied)
is an approximation of the `ground truth' tensor $T_{\leq m}$ defined
by 
\[
T_{\leq m}^{\beta}(\sigma_{1:m})=f\left(x_{\leq m}+2^{-m}c^{\beta}\right),
\]
 where $x_{\leq m}=\sum_{k=1}^{m}2^{-k}\sigma_{k}$, consisting of
evaluations of $f$ on Chebyshev-Lobatto grids, shifted and scaled
to dyadic subintervals of $[0,1]$.

It is natural to ask whether it is possible to `invert' our construction
to recover such evaluations from a given QTT for a function $f$,
which only directly furnishes evaluations of $f$ on the dyadic grid
$\mathcal{D}_{K}$. This can be achived in two stages:
\begin{enumerate}
\item Recover evaluations of $f$ on small-scale Chebyshev-Lobatto grids
using Lagrange interpolation on small-scale equispaced grids.
\item Recover $f$ on larger scale Chebyshev-Lobatto grids from finer Chebyshev-Lobatto
grids by Chebyshev interpolation.
\end{enumerate}
To accomplish the first stage, consider the Lagrange interpolating
polynomials on a small-scale dyadic grid $\mathcal{D}_{q}\subset[0,1]$,
which we evaluate on the Chebyshev-Lobatto grid $\{c^{\beta}\}\subset[0,1]$.
The dyadic grid points in $\mathcal{D}_{q}$ can be written $\sum_{k=1}^{q}2^{-k}\sigma_{k}$,
indexed by $\sigma_{1:q}\in\{0,1\}^{q}$, which motivates us to define
the Lagrange interpolation tensor $L\in\R^{2^{q}\times(N+1)}$ 
\[
L^{\beta}(\sigma_{1:q})=\prod_{\tau_{1:q}\in\{0,1\}^{q}\backslash\{\sigma_{1:q}\}}\frac{c^{\beta}-\sum_{k=1}^{q}2^{-k}\tau_{k}}{\sum_{k=1}^{q}2^{-k}(\sigma_{k}-\tau_{k})}.
\]
 $L$ can be contracted with $S$, as indicated at the top of Figure
\ref{fig:invert}, to obtain an approximation $S_{\leq K-q}$ for
$T_{\leq K-q}$. The expression for this contraction is written 
\[
S_{\leq K-q}^{\beta}(\sigma_{1:K-q})=\sum_{\sigma_{K-q+1:K}\in\{0,1\}^{q}}S(\sigma_{1:K})\,L^{\beta}(\sigma_{K-q+1:K}).
\]

Due to Runge's phenomenon, it is not safe to take $q$ large while
the depth $K$ is fixed. However, since the conceit of QTT is that
the depth $K$ can be taken large enough to resolve all the fine-scale
structure of the target function $f$, even Lagrange interpolation
with a fixed small value of $q$ will be very accurate. For smooth
functions, the error will be exponentially small in $K$ for fixed
$q$, with more rapid asymptotic convergence in $K$ when $q$ is
larger. For many concrete purposes, $q=1$ should suffice. We will
not make any explicit careful statement, though standard Lagrange
error bounds can be consulted.

\begin{figure}
\centering{}\includegraphics[scale=0.5]{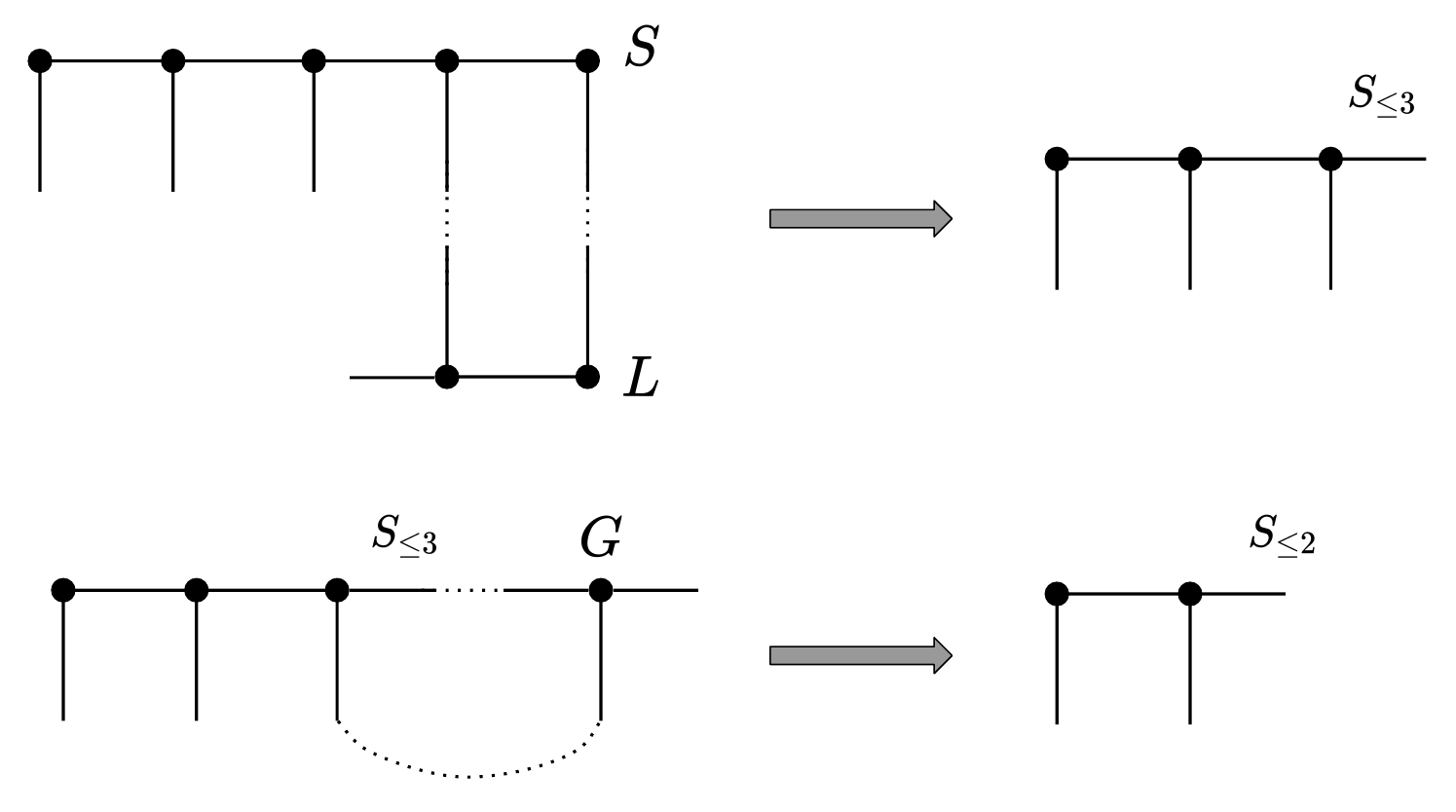}\caption{Top: stage 1 of the inversion procedure. Bottom: one level of stage
2.}
\label{fig:invert}
\end{figure}

Now we turn to stage 2 of the inversion procedure. To inductively
obtain $S_{\leq k}$ from $S_{\leq k+1}$, we want to invert the operation
of tacking on a single tensor core $A$. Accordingly, we want to define
a tensor $G\in\R^{2\times(N+1)\times(N+1)}$, indexed as $G^{\alpha\beta}(\sigma)$,
such that 
\begin{equation}
\sum_{\sigma\in\{0,1\}}\sum_{\gamma=0}^{N}A^{\alpha\beta}(\sigma)G^{\beta\gamma}(\sigma)=\delta^{\alpha\gamma},\label{eq:inversion}
\end{equation}
 i.e., we seek $G$ which is a generalized inverse of $A$, viewed
appropately as a matrix of shape $2(N+1)\times(N+1)$.

This problem is underdetermined, but there exists a solution $G$
whose entries remain bounded as the interpolation grid size $N$ becomes
large: 
\begin{equation}
G^{\alpha\beta}(\sigma)=\begin{cases}
\delta_{\sigma0}P^{\alpha}\left(2c^{\beta}\right), & c^{\beta}\in[0,1/2]\\
\delta_{\sigma1}P^{\alpha}(2c^{\beta}-1), & c^{\beta}\in(1/2,1].
\end{cases}\label{eq:G}
\end{equation}
 Note that by construction $\vert G^{\alpha\beta}(\sigma)\vert\leq1$
for all $\alpha,\beta,\sigma$.
\begin{lem}
The tensor core $G$ as defined in (\ref{eq:G}) satisfies the inversion
property (\ref{eq:inversion}). Moreover, all entries of $G$ are
bounded by $1$ in absolute value.
\end{lem}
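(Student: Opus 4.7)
The plan is to prove the inversion identity by algebraic substitution followed by Lagrange exactness, and to dispatch the boundedness claim by direct inspection of the explicit formula for $G$. The first step is the main content; the second is essentially definitional.

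I would begin by fixing indices $\alpha, \gamma \in [N+1]$ and splitting on whether $c^\gamma \in [0, 1/2]$ or $c^\gamma \in (1/2, 1]$. In either case the definition of $G$ selects a unique $\sigma \in \{0,1\}$ for which $G^{\beta\gamma}(\sigma)$ is nonzero, so the outer sum over $\sigma$ in (\ref{eq:inversion}) collapses to a single term. In the first case, only $\sigma = 0$ contributes, and the left-hand side reduces to
\[
\sum_{\beta=0}^{N} A^{\alpha\beta}(0)\,G^{\beta\gamma}(0) \;=\; \sum_{\beta=0}^{N} P^{\alpha}\!\left(\tfrac{c^{\beta}}{2}\right) P^{\beta}(2c^{\gamma}).
\]
I would then recognize the right-hand side as the $(N+1)$-point Lagrange interpolation formula, on the Chebyshev--Lobatto nodes $\{c^\beta\}_{\beta=0}^{N} \subset [0,1]$, for the function $y \mapsto P^{\alpha}(y/2)$ evaluated at $y = 2c^\gamma \in [0,1]$. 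Because this function is a polynomial in $y$ of degree at most $N$, Chebyshev--Lobatto interpolation is exact (the very same exactness principle exploited in Lemma~\ref{lem:chebcores}), so the sum equals $P^{\alpha}(c^{\gamma}) = \delta^{\alpha\gamma}$. The case $c^\gamma \in (1/2, 1]$ is handled identically: only $\sigma = 1$ contributes, yielding an exact Lagrange interpolation of the polynomial $y \mapsto P^{\alpha}((1+y)/2)$ at $y = 2c^\gamma - 1 \in (0, 1]$, again equal to $P^{\alpha}(c^{\gamma}) = \delta^{\alpha\gamma}$.

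For the boundedness, I would observe that every nonzero entry of $G$ is of the form $P^{\alpha}(y)$ for some abscissa $y$ drawn from $\{2c^\beta : c^\beta \le 1/2\} \cup \{2c^\beta - 1 : c^\beta > 1/2\} \subset [0,1]$, and conclude the claim $|G^{\alpha\beta}(\sigma)| \le 1$ by direct evaluation of the Chebyshev--Lobatto cardinal functions at these specific points (noting in particular that at the endpoints $y \in \{0, 1\}$ the evaluation collapses to a Kronecker delta). The main (and really only) nontrivial step is recognizing the Lagrange reformulation in the first part; once it is in hand, both conclusions follow without further analytic input.
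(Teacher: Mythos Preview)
Your proof of the inversion identity is correct and follows exactly the paper's argument: collapse the $\sigma$-sum using the case split on $c^\gamma$, then recognize the remaining $\beta$-sum as exact $(N+1)$-point Lagrange interpolation of the degree-$N$ polynomial $P^\alpha\!\left(\tfrac{\sigma+\cdot}{2}\right)$, yielding $P^\alpha(c^\gamma)=\delta^{\alpha\gamma}$. On boundedness, both you and the paper treat the claim as essentially immediate from the explicit formula; the paper simply asserts it ``by construction'' before the lemma and spends the proof only on the inversion, so your level of detail matches (though neither argument spells out why the Chebyshev--Lobatto cardinal functions stay bounded by $1$ on $[0,1]$).
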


\begin{proof}
It remains only to verify the inversion property. Compute: 
\[
\sum_{\sigma\in\{0,1\}}\sum_{\beta=0}^{N}A^{\alpha\beta}(\sigma)G^{\beta\gamma}(\sigma)=\begin{cases}
\sum_{\beta=0}^{N}A^{\alpha\beta}(0)G^{\beta\gamma}(0), & c^{\gamma}\in[0,1/2],\\
\sum_{\beta=0}^{N}A^{\alpha\beta}(1)G^{\beta\gamma}(1), & c^{\gamma}\in(1/2,1].
\end{cases}
\]
 Note that for either $\sigma\in\{0,1\}$, 
\begin{align*}
\sum_{\beta=0}^{N}A^{\alpha\beta}(\sigma)G^{\beta\gamma}(\sigma) & =\sum_{\beta}P^{\alpha}\left(\frac{\sigma+c^{\beta}}{2}\right)P^{\beta}(2c^{\gamma}-\sigma)\\
 & =P^{\alpha}\left(\frac{\sigma+(2c^{\gamma}-\sigma)}{2}\right)\\
 & =P^{\alpha}(c^{\gamma})\\
 & =\delta^{\alpha\gamma},
\end{align*}
 where we have used the fact that $(N+1)$-point polynomial interpolation
of the degree-$N$ polynomial $P^{\alpha}\left(\frac{\sigma+\,\cdot\,}{2}\right)$
is exact.
\end{proof}

\section{Multiresolution interpolative construction \label{sec:sharp}}

So far our interpolative construction depends only evaluations of
the target function $f$ on a single coarse grid $\frac{\sigma+c^{\alpha}}{2}$,
$\sigma\in\{0,1\}$, $\alpha=0,\ldots,N$. In this construction we
must take $N$ large enough to resolve all the features of $f$, as
we have quantified in Proposition \ref{prop:basicconstructionbound}.
It can be observed empirically, however, that certain functions, which
may even be nonsmooth, have low QTT ranks even though they cannot
be interpolated from a single coarse grid.

In this section, we explain this behavior and provide a direct construction
that achieves low TT ranks using additional \emph{a priori} knowledge.
We comment that a rank-revealing construction using sparse interpolation,
as outlined in Sections \ref{sec:rankrevealing} and \ref{sec:sparse},
may still be adequate for the practical purpose of compressing some
target function $f$.

Suppose that at each level $k<K$, we are given a collection $\mathcal{S}_{k}$
of multi-indices $\sigma_{1:k}^{i}=(\sigma_{1}^{i},\ldots,\sigma_{k}^{i})$,
where $i=1,\ldots,q_{k}$. Concretely: 
\[
\mathcal{S}_{k}=\left\{ \sigma_{1:k}^{i}\,:\,i=1,\ldots,q_{k}\right\} .
\]
 For each $i$, let 
\[
x_{\leq k}^{i}=\sum_{l=1}^{k}2^{-l}\sigma_{l}
\]
 denote the dyadic grid point corresponding to $\sigma_{1:k}^ {}$.
By convention we take $\mathcal{S}_{K}=\emptyset$.

We will think of the dyadic subintervals $[x_{\leq k}^{i},x_{\leq k}^{i}+2^{-k}]$
as a collection of `dangerous' subintervals on which the sharp behavior
of $f$ makes it too dangerous to interpolate. Within these subintervals,
we defer function evaluation.

Importantly, we assume that each $\sigma_{1:k+1}^{i}\in\mathcal{S}_{k+1}$
can be written as $(\sigma_{1:k}^{j},\sigma)$ for some $\sigma_{1:k}^{j}\in\mathcal{S}_{k}$
and $\sigma\in\{0,1\}$. In other words, each dangerous subinterval
is contained within a dangerous subinterval at the next largest scale.

For example, consider a function such as $f(x)=\sqrt{x}$, with a
cusp at the left endpoint of the interval $[0,1]$. In this case it
will be effective to take $q_{k}=1$ and $\sigma_{1:k}^{1}=(0,0,\ldots,0)$
for each $k$. Accordingly, we view the subintervals $[0,2^{-k}]$
as dangerous, but all other dyadic subintervals, such as $[2^{-k},2^{-k}+2^{-m}]$
for $m\geq k$, are viewed as safe.

We illustrate a choice of subintervals for a function with a cusp
in Figure \ref{fig:subintervals}.

\begin{figure}
\centering{}\includegraphics[scale=0.7]{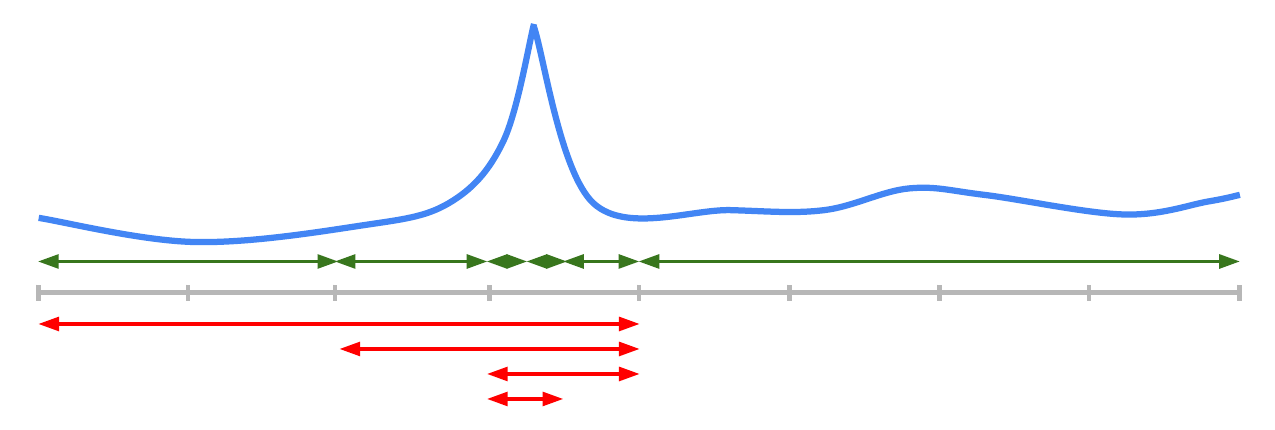}\caption{Consider a function with a single cusp, whose graph is indicated by
the blue line. The red intervals indicate our choice of `dangerous'
subintervals at each level of resolution. The resulting QTT with $K=4$
will perform exact Chebyshev-Lobatto interpolation using function
evaluations on the subintervals indicated in green.}
\label{fig:subintervals}
\end{figure}

Our inductive target for the QTT construction is the tensor $T_{\leq k}\in\R^{2^{k}\times(N+1+q_{k})}$
defined by stacking two tensors $T_{\leq k}^{\,\mathrm{up}}\in\R^{2^{k}\times(N+1)}$
and $T_{\leq k}^{\,\mathrm{down}}\in\R^{2^{k}\times q_{k}}$, where
\[
\left[T_{\leq k}^{\,\mathrm{up}}\right]^{\beta}(\sigma_{1:k})=\begin{cases}
f\left(x_{\leq k}+2^{-k}c^{\beta}\right), & \sigma_{1:k}\notin\mathcal{S}_{k},\\
0, & \text{otherwise},
\end{cases}
\]
 and 
\[
\left[T_{\leq k}^{\,\mathrm{down}}\right]^{i}(\sigma_{1:k})=\begin{cases}
1, & \sigma_{1:k}=\sigma_{1:k}^{i},\\
0, & \text{otherwise}.
\end{cases}
\]

Thus $T_{\leq k}(\sigma_{1:k})$ stores several pieces of information.
First, it tells us whether or not we are in a dangerous interval.
In this case, the top part of the vector is the zero vector, and the
bottom part is an indicator vector telling us which dangerous subinterval
we are in. Second, it tells us, if we are in a safe interval, the
function values that we will need to interpolate the function on this
interval.

We need to determine the tensor core $A_{k}$ (extending the core
$A$ introduced in Section \ref{sec:basicconstruction}), that can
be attached to $T_{\leq k}$ to obtain $T_{\leq k+1}$ approximately.
This tensor can be defined in a block sense as 
\[
A_{k}(\sigma)=\left(\begin{array}{cc}
A(\sigma) & 0\\
F_{k}(\sigma) & \chi_{k}(\sigma)
\end{array}\right).
\]
 Here the entire matrix is $(N+1+q_{k-1})\times(N+1+q_{k})$ and the
upper-left block is $(N+1)\times(N+1)$. The lower blocks are defined
as 
\[
F_{k}^{i\beta}(\sigma)=\begin{cases}
0, & (\sigma_{k-1}^{i},\sigma)\in\mathcal{S}_{k},\\
f(x_{\leq k-1}^{i}+2^{-k}\sigma+2^{-k}c^{\beta}), & \text{otherwise},
\end{cases}
\]
 and 
\[
\chi_{k}^{ij}(\sigma)=\begin{cases}
1, & \sigma_{1:k}^{j}=(\sigma_{1:k-1}^{i},\sigma),\\
0, & \text{otherwise}.
\end{cases}
\]

For the initial tensor core, we can simply take $A_{1}=T_{\leq1}$
directly. For the final tensor core, define 
\[
A_{K}(\sigma)=\left(\begin{array}{c}
A_{\mathrm{R}}(\sigma)\\
F_{K}(\sigma)
\end{array}\right),
\]
 where 
\[
F_{K}^{i,1}(\sigma)=f(x_{\leq K-1}^{i}+2^{-K}\sigma).
\]

The entire QTT is constructed as $S:=A_{1}A_{2}\cdots A_{K}$, which
is shorthand for (\ref{eq:tt}). Careful inspection reveals the following
interpretation of the entries $S(\sigma_{1:K})$: 
\begin{thm}
\label{thm:multires} The QTT $S$ constructed following the above
procedure admits the following interpretation. For fixed $\sigma_{1:K}$,
consider the smallest $k$ such that $\sigma_{1:k}\notin\mathcal{S}_{k}$.
If $k\leq K-1$, then 
\[
S(\sigma_{1:K})=\sum_{\alpha=0}^{N}f(x_{\leq k}+2^{-k}c^{\alpha})\,P^{\alpha}(2^{k}x_{>k}),
\]
 where $x_{\leq k}=\sum_{l=1}^{k}2^{-l}\sigma_{l}$ and $x_{>k}=\sum_{l=k+1}^{K}2^{-l}\sigma_{l}$.
In other words, the value $S(\sigma_{1:K})$ is furnished in this
case by interpolation of $f$, evaluated at the point $x_{\leq K}$,
using a Chebyshev-Lobatto grid shifted and scaled to the interval
$\left[x_{\leq k},x_{\leq k}+2^{-k}\right]$. Meanwhile, if $k=K$,
then $S(\sigma_{1:K})$ is furnished by the exact evaluation $f(x_{\leq K})$.

Letting $T$ denote the exact quantized tensor representation of $f$,
it follows that 
\[
\Vert S-T\Vert_{\infty}\leq\max\left\{ E_{I,N}[f]\ :I\notin\bigcup_{k=1}^{K-1}\mathcal{S}_{k}\ \text{is a dyadic subinterval of}\ [0,1]\ \text{of length at least }2^{-(K-1)}\right\} .
\]
 In the last expression, the dyadic subintervals are the intervals
of the form $[x,x+2^{-k}]$ where $x\in\mathcal{D}_{k}$ and $k\in\{1,\ldots,K-1\}$,
and $E_{I,N}[f]$ denotes the $L^{\infty}$-norm error of $(N+1)$-point
Chebyshev-Lobatto interpolation of $f$ on the interval $I$.
\end{thm}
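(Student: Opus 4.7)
The plan is to prove the characterization of $S(\sigma_{1:K})$ by induction on $k$, tracking the block structure of the partial contraction $S_{\leq k} := A_1 A_2 \cdots A_k$. Viewing $S_{\leq k}(\sigma_{1:k}) \in \R^{N+1+q_k}$ as a row vector split into an ``up'' part $u_k \in \R^{N+1}$ and a ``down'' part $d_k \in \R^{q_k}$, my inductive hypothesis is: (i) $d_k$ is the indicator vector with $d_k^i = 1$ iff $\sigma_{1:k} = \sigma_{1:k}^i$; (ii) if $\sigma_{1:k} \in \mathcal{S}_k$ then $u_k = 0$; and (iii) otherwise, letting $k_0 \leq k$ be the smallest index with $\sigma_{1:k_0} \notin \mathcal{S}_{k_0}$ and letting $p$ denote the degree-$N$ Chebyshev-Lobatto interpolant of $f$ on the subinterval $I_{k_0} := [x_{\leq k_0}, x_{\leq k_0} + 2^{-k_0}]$, we have $u_k^\beta = p(x_{\leq k} + 2^{-k} c^\beta)$.

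The base case $k = 1$ is immediate from $A_1 = T_{\leq 1}$. For the inductive step, the block form of $A_{k+1}(\sigma_{k+1})$ gives $u_{k+1} = u_k A(\sigma_{k+1}) + d_k F_{k+1}(\sigma_{k+1})$ and $d_{k+1} = d_k \chi_{k+1}(\sigma_{k+1})$. The evolution of $d_k$ is immediate from the definition of $\chi_{k+1}$. For $u_{k+1}$, I would split into two cases: if the path was dangerous at level $k$ so that $u_k = 0$, then $u_{k+1} = d_k F_{k+1}(\sigma_{k+1})$, which by the definition of $F_{k+1}$ either remains zero (if the path is still dangerous at level $k+1$) or equals $f(x_{\leq k+1} + 2^{-(k+1)} c^\beta)$, matching the claim with $k_0 = k+1$. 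If instead the path was already safe at level $k$ so that $d_k = 0$, then $u_{k+1} = u_k A(\sigma_{k+1})$; here I would observe that $v \mapsto p(x_{\leq k} + 2^{-k} v)$ is a polynomial of degree $N$ in $v$, so the one-step exact Chebyshev interpolation identity underlying Lemma \ref{lem:chebcores} gives $u_{k+1}^\beta = p(x_{\leq k+1} + 2^{-(k+1)} c^\beta)$, preserving the hypothesis.

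For the final core, the block form of $A_K(\sigma_K)$ yields $S(\sigma_{1:K}) = u_{K-1} A_{\mathrm{R}}(\sigma_K) + d_{K-1} F_K(\sigma_K)$. When $\sigma_{1:K-1} \in \mathcal{S}_{K-1}$, only the second term contributes and returns $f(x_{\leq K})$ directly, yielding the $k = K$ case. Otherwise, only the first term contributes and produces $\sum_\beta p(x_{\leq K-1} + 2^{-(K-1)} c^\beta) P^\beta(\sigma_K/2) = p(x_{\leq K})$ by one more application of the exact interpolation identity, which coincides with $\sum_{\alpha=0}^N f(x_{\leq k_0} + 2^{-k_0} c^\alpha) P^\alpha(2^{k_0} x_{>k_0})$. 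The error bound then follows pointwise: $\vert S(\sigma_{1:K}) - T(\sigma_{1:K}) \vert$ is either zero (when $k_0 = K$) or the pointwise Chebyshev-Lobatto interpolation error of $f$ on the safe dyadic subinterval $I_{k_0}$, which is bounded above by $E_{I_{k_0}, N}[f]$. Taking the maximum over $\sigma_{1:K}$ and noting that $I_{k_0}$ ranges precisely over safe dyadic subintervals of length at least $2^{-(K-1)}$ yields the stated bound.

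The main obstacle is the bookkeeping in the inductive step, specifically verifying that the polynomial structure of $u_k$ is preserved across the repeated action of $A(\sigma_{k+1})$ and that the identity $\sum_\alpha p(x_{\leq k} + 2^{-k} c^\alpha) P^\alpha((\sigma_{k+1} + c^\beta)/2) = p(x_{\leq k+1} + 2^{-(k+1)} c^\beta)$ is exact; this is precisely the content of the derivation underlying Lemma \ref{lem:chebcores}. The interaction between $d_k$ and $F_{k+1}$ at the precise transition from dangerous to safe also requires care, but it reduces to unpacking the definitions of $F_{k+1}$ and $\chi_{k+1}$ in terms of the ancestor multi-indices $\sigma_{1:k}^i$.
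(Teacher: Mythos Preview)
Your proposal is correct and is precisely the ``careful inspection'' that the paper alludes to but does not write out explicitly; the paper offers no proof beyond the phrase ``Careful inspection reveals the following interpretation,'' so your inductive tracking of the block structure $(u_k,d_k)$ through the cores $A_k$ is exactly the argument the paper has in mind. One very minor remark: in the final step your $I_{k_0}$ ranges only over the \emph{first-safe} subintervals (those whose parent at the previous level is dangerous or with $k_0=1$), which is a subset of all safe dyadic subintervals appearing in the stated maximum, so the inequality in the theorem holds as an upper bound rather than an equality---but this is consistent with the statement.
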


Using this error bound, it is simple to derive \emph{a priori }bounds
for the compression of certain functions. For example, consider the
function $f(x)=\sqrt{x}$ with the subinterval selection indicated
above---i.e., $q_{k}=1$ and $\sigma_{1:k}^{1}=(0,0,\ldots,0)$ for
each $k=1,\ldots,K-1$. The `worst' subintervals where we need to
control the interpolation are the subintervals $[2^{-k},2^{1-k}]$,
$k=1,\ldots,K-1$. In fact, $f$ is self-similar on this collection
of intervals, and the worst of these is the case $k=1$ (since the
restrictions of $f$ to all the other intervals, rescaled to the same
domain $[1/2,1]$ are scalar multiples with decaying prefactors).
Hence we are motivated to bound $(N+1)$-point Chebyshev-Lobatto interpolation
error of $f(x)=\sqrt{x}$ on the interval $[1/2,1]$. In turn we are
motivated to bound the interpolation error of $g(x)=\frac{1}{2}\sqrt{x+3}$
on the reference interval $[-1,1]$. Now $g$ extends analytically
to the Bernstein ellipse $\mathcal{E}_{\rho}$ for $\rho\in[1,3+2\sqrt{2}]$
and is bounded by $M_{\rho}=\frac{1}{2}\sqrt{\frac{\rho+\rho^{-1}}{2}+3}$
on this region. Consider the extreme choice $\rho=3+2\sqrt{2}$, yielding
$M_{\rho}=\frac{\sqrt{6}}{2}$, so by applying Theorem 8.2 of \cite{TrefethenBook2019}
(cf. the proof Proposition \ref{prop:analyticinterp}) we have that
the interpolation error is bounded by 
\[
\frac{2\sqrt{6}\rho^{-N}}{\rho-1}\leq(1.015)\times(5.828)^{-N}.
\]

\section{Multivariate functions \label{sec:multivariate}}

Several approaches have been considered for extending the use of quantized
tensor trains to multivariate functions; see for instance \cite{ye2022,ye2023quantized}.

We review the main ideas of the multivariate setting. Considering
a function $f:[0,1]^{d}\ra\R$, we now place the vector variable $\bx\in[0,1]^{d}$
in bijection with sequences of the form $\bsig_{1},\bsig_{2},\ldots$,
where $\bsig_{k}=(\sigma_{k1},\ldots\sigma_{kd})\in\{0,1\}^{d}$ for
each $k$, using the identification 
\begin{equation}
\bx=\sum_{k=1}^{\infty}2^{-k}\bsig_{k}=\left(\begin{array}{c}
0.\sigma_{11}\sigma_{21}\sigma_{31}\cdots\\
0.\sigma_{12}\sigma_{22}\sigma_{32}\cdots\\
\vdots\\
0.\sigma_{1d}\sigma_{2d}\sigma_{3d}\cdots
\end{array}\right),\label{eq:ident-1}
\end{equation}
 where the entries in the expression at right indicate binary decimal
expansions for each of the components of $\bx$.

We choose a depth $K$ at which to truncate the decimal expansion,
so the identification 
\[
\bx\leftrightarrow(\bsig_{1},\ldots,\bsig_{K})
\]
 is a bijection between the dyadic grid $\mathcal{D}_{K}^{d}=\left(2^{-K}\,\mathbb{Z}^{d}\right)\cap[0,1)^{d}$
and the set $\{0,1\}^{d}\times\cdots\times\{0,1\}^{d}$, where the
direct product includes $K$ factors. Based on this identification,
we can in turn identify functions $f:\mathcal{D}_{K}^{d}\ra\R$ with
tensors $T\in\R^{2^{d}}\times\cdots\times\R^{2^{d}}$ via 
\[
f(\bx)=T(\bsig_{1},\ldots,\bsig_{d}).
\]

Such a tensor could be compressed as a tensor train where the external
dimension of each core is $2^{d}$. However, it is typical not to
take this course but instead to further split each $\bsig_{k}=(\sigma_{k1},\ldots\sigma_{kd})$
into its component parts to allow for the possibility of additional
compression. 

Accordingly we can $f$ as a tensor via \textbf{\emph{either}} 
\begin{equation}
f(\bx)=T(\underbrace{\sigma_{11},\sigma_{12},\ldots,\sigma_{1d}}_{\text{depth 1}},\,\underbrace{\sigma_{21},\sigma_{22},\ldots,\sigma_{2d}}_{\text{depth 2}},\,\ldots,\underbrace{\sigma_{K1},\sigma_{K2},\ldots,\sigma_{Kd}}_{\text{depth K}})\label{eq:interleaving}
\end{equation}
 \textbf{\emph{or}} 
\begin{equation}
f(\bx)=T(\underbrace{\sigma_{11},\sigma_{21},\ldots,\sigma_{K1}}_{\text{variable 1}},\,\underbrace{\sigma_{12},\sigma_{22},\ldots,\sigma_{K2}}_{\text{variable 2}},\,\ldots,\underbrace{\sigma_{1d},\sigma_{2d},\ldots,\sigma_{Kd}}_{\text{variable d}}).\label{eq:serial}
\end{equation}
 In the first representation (\ref{eq:interleaving}), the bits $\sigma_{ki}$
are organized first by depth index $k$ and then by variable index
$i$. In the second representation (\ref{eq:serial}), they are organized
first by variable and then by depth.

Typically (\ref{eq:interleaving}) is referred to as the \textbf{\emph{interleaved
ordering}}, while (\ref{eq:serial}) is referred to as the \textbf{\emph{serial
ordering}}. (However, we comment that the naming convention depends
on a matter of perspective.) In the following we explain how our construction
generalizes to both of these orderings.

\subsection{Interleaved ordering \label{sec:interleaved}}

For simplicity of presentation, we will explicitly consider the case
where $\bx=(x,y,z)\in\R^{3}$, i.e., $d=3$.

Here the left core $A_{\mathrm{L}}^{x}\in\R^{2\times1\times(N+1)^{3}}$
is defined by 
\begin{equation}
\left[A_{\mathrm{L}}(\sigma)\right]^{1,\beta_{x}\beta_{y}\beta_{z}}=f\left(\frac{\sigma+c^{\beta_{x}}}{2},c^{\beta_{y}},c^{\beta_{z}}\right),\quad\beta_{x},\beta_{y},\beta_{z}\in[N+1],\ \sigma\in\{0,1\}.\label{eq:ALxyz}
\end{equation}
 Then with the univariate interpolating tensor core $A\in\R^{2\times(N+1)\times(N+1)}$
defined as in (\ref{eq:Aint}), we define interpolating tensor cores
$A^{x},A^{y},A^{z}\in\R^{2\times(N+1)^{3}\times(N+1)^{3}}$ for the
$x$, $y$, and $z$ dimensions as 
\begin{equation}
A^{x}(\sigma)=A(\sigma)\otimes I_{N+1}\otimes I_{N+1},\quad A^{y}(\sigma)=I_{N+1}\otimes A(\sigma)\otimes I_{N+1},\quad A^{z}(\sigma)=I_{N+1}\otimes I_{N+1}\otimes A(\sigma).\label{eq:Axyz}
\end{equation}
 Observe that appending the tensor core $A^{x}$ from the right, for
example, performs Chebyshev interpolation in the $x$ dimension.

Finally the tensor train is capped off with cores $A_{\mathrm{R}}^{x}\in\R^{2\times(N+1)^{3}\times(N+1)^{2}}$,
$A_{\mathrm{R}}^{y}\in\R^{2\times(N+1)^{2}\times(N+1)}$, $A_{\mathrm{R}}^{z}\in\R^{2\times(N+1)\times1}$
given by 
\begin{equation}
A_{\mathrm{R}}^{x}(\sigma)=A_{\mathrm{R}}(\sigma)\otimes I_{N+1}\otimes I_{N+1},\quad A_{\mathrm{R}}^{y}(\sigma)=A_{\mathrm{R}}(\sigma)\otimes I_{N+1},\quad A_{\mathrm{R}}^{z}(\sigma)=A_{\mathrm{R}},\label{eq:ARxyz}
\end{equation}
 where $A_{\mathrm{R}}$ is defined as in (\ref{eq:AR}). The overall
construction is illustrated in Figure \ref{fig:interleaving}.

\begin{figure}
\centering{}\includegraphics[scale=0.6]{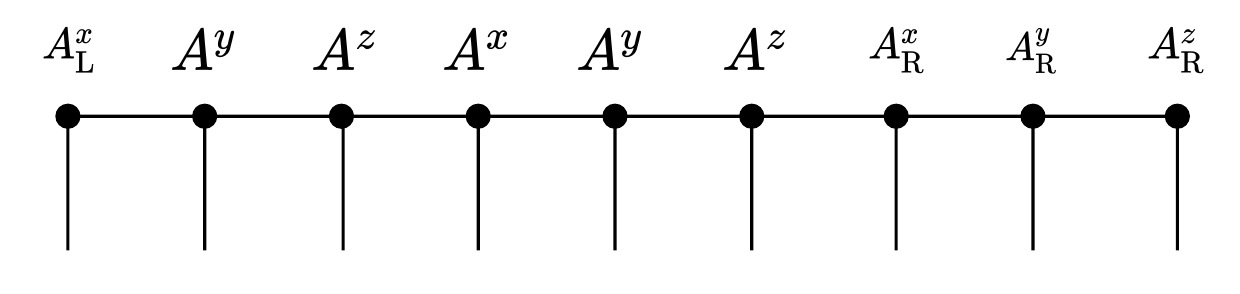}\caption{Interpolative multivariate QTT construction in the interleaved ordering.}
\label{fig:interleaving}
\end{figure}

It is straightforward to extend the rank-revealing construction of
Section \ref{sec:rankrevealing} to this setting by successively performing
SVDs in the same fashion. Note that due to the tensor product structure,
matrix-vector multiplication by the matrices $A^{x}(\sigma),A^{y}(\sigma),A^{z}(\sigma)$
can be achieved in $O(N^{4})$ time. In the general $d$-dimensional
setting, the scaling of these matvecs is $O(N^{d+1})$. Therefore
if the revealed rank is $r$, then the cost of this construction is
only $O(N^{d+1}r^{2})$.

It also automatic to extend the sparse interpolative construction
of Section \ref{sec:sparse} to this setting by simply replacing the
dense tensor core $A(\sigma)$ with its sparse counterpart. Then we
can obtain $O(N^{d}r^{2})$ complexity in the rank-revealing construction.

It is also possible to extend the construction of Section \ref{sec:sharp}
for resolving sharp features, by choosing a collection of nested dyadic
rectangles.

\subsection{Serial ordering \label{sec:serial}}

Again for concreteness we will explicitly consider the case where
$\bx=(x,y,z)\in\R^{3}$, i.e., $d=3$.

For QTT construction in serial ordering, we will make use of the tensor
cores $A_{\mathrm{L}}$, $A^{x}$, $A_{\mathrm{R}}^{x}$, $A_{\mathrm{R}}^{y}$,
and $A_{\mathrm{R}}^{z}$, defined before in equations (\ref{eq:ALxyz}),
(\ref{eq:Axyz}), and (\ref{eq:ARxyz}). However, the tensors $A^{y}$
and $A^{z}$ defined above are not directly suitable, so we also define
\[
\hat{A}^{y}(\sigma)=A(\sigma)\otimes I_{N+1},\quad\hat{A}^{z}(\sigma)=A(\sigma).
\]
 Here although simply $\hat{A}^{z}=A$, we opt for the suggestive
notation. The construction in serial ordering can be viewed as interpolating
in the $x$ variable to full depth, then capping off this interpolation
and interpolating in the $y$ variable to full depth, etc. This procedure
is illustrated in Figure \ref{fig:serial}.
\begin{figure}
\centering{}\includegraphics[scale=0.6]{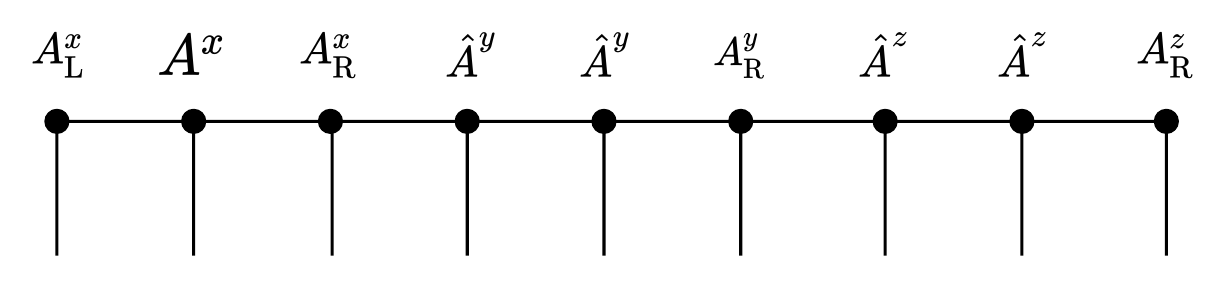}\caption{Interpolative multivariate QTT construction in the serial ordering.}
\label{fig:serial}
\end{figure}

The rank-revealing and sparse constructions of Sections \ref{sec:rankrevealing}
and \ref{sec:sparse} can be extended here to yield cost scalings
of $O(N^{d+1}r^{2})$ and $O(N^{d}r^{2})$, respectively.

However, we comment that it is not obvious how to extend the multiresolution
construction of Section \ref{sec:sharp} to this setting.

\section{Numerical experiments \label{sec:numerical}}

In this section we present several illustrative numerical experiments.

\subsection{Dense interpolation}

Consider the function $f:[0,1]\ra\R$ defined by 
\begin{equation}
f(x)=\sum_{j=1}^{J}\left[a_{j}\cos(2\pi jx)+b_{j}\sin(2\pi jx)\right],\label{eq:oscil}
\end{equation}
 where the $a_{j},b_{j}$ are independently distributed standard normal
random variables.

First we fix one typical instantiantiation of this function with $J=25$.
In Figure \ref{fig:oscillatory}, we present the accuracies of both
our basic interpolative construction and tensor cross interpolation
(implemented as $\texttt{amen\_cross}$ in the $\textsf{TT-Toolbox}$
package \cite{TTtoolbox}) against the number of function evaluations
required by each method. Note that the construction of Section \ref{sec:basicconstruction}
requires $2N+1$ function evaluations. The figure demonstrates the
significant advantage of the first approach.

\begin{figure}
\centering{}\includegraphics[scale=0.4]{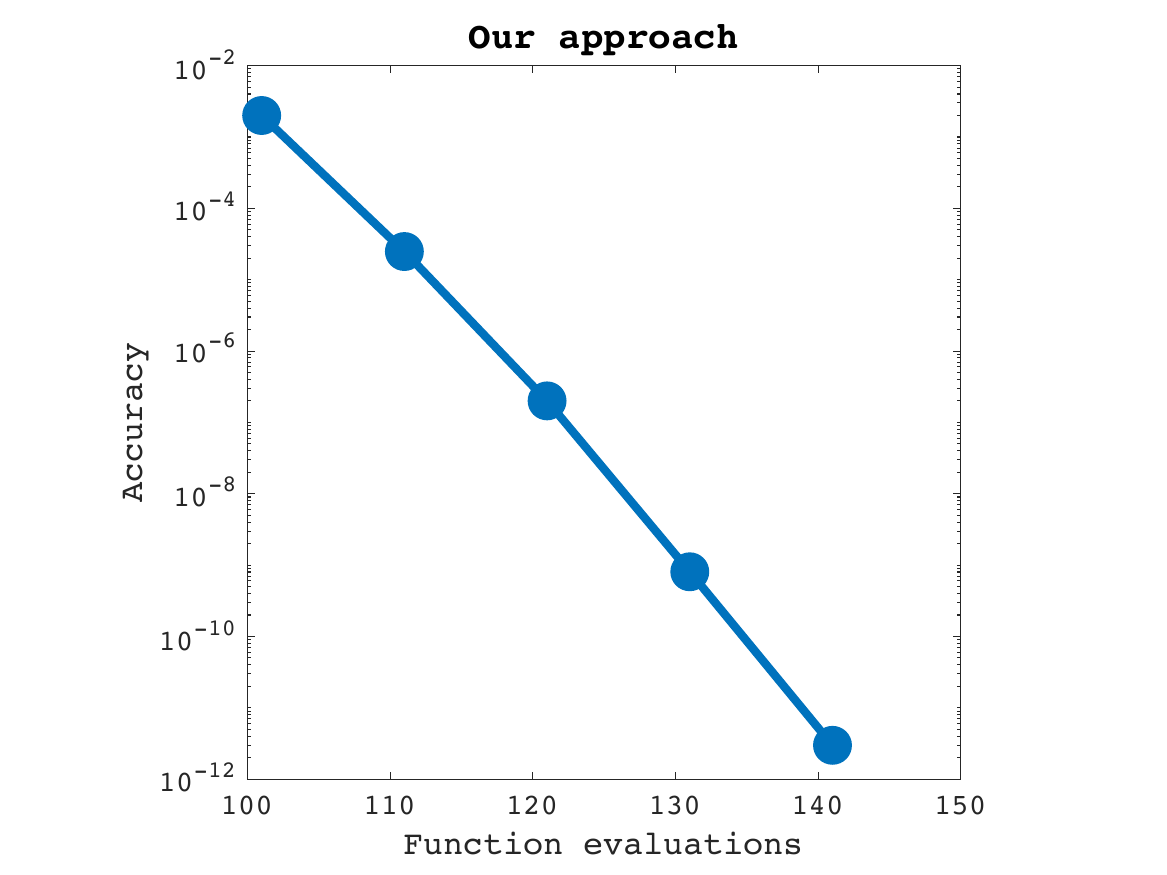}\includegraphics[scale=0.4]{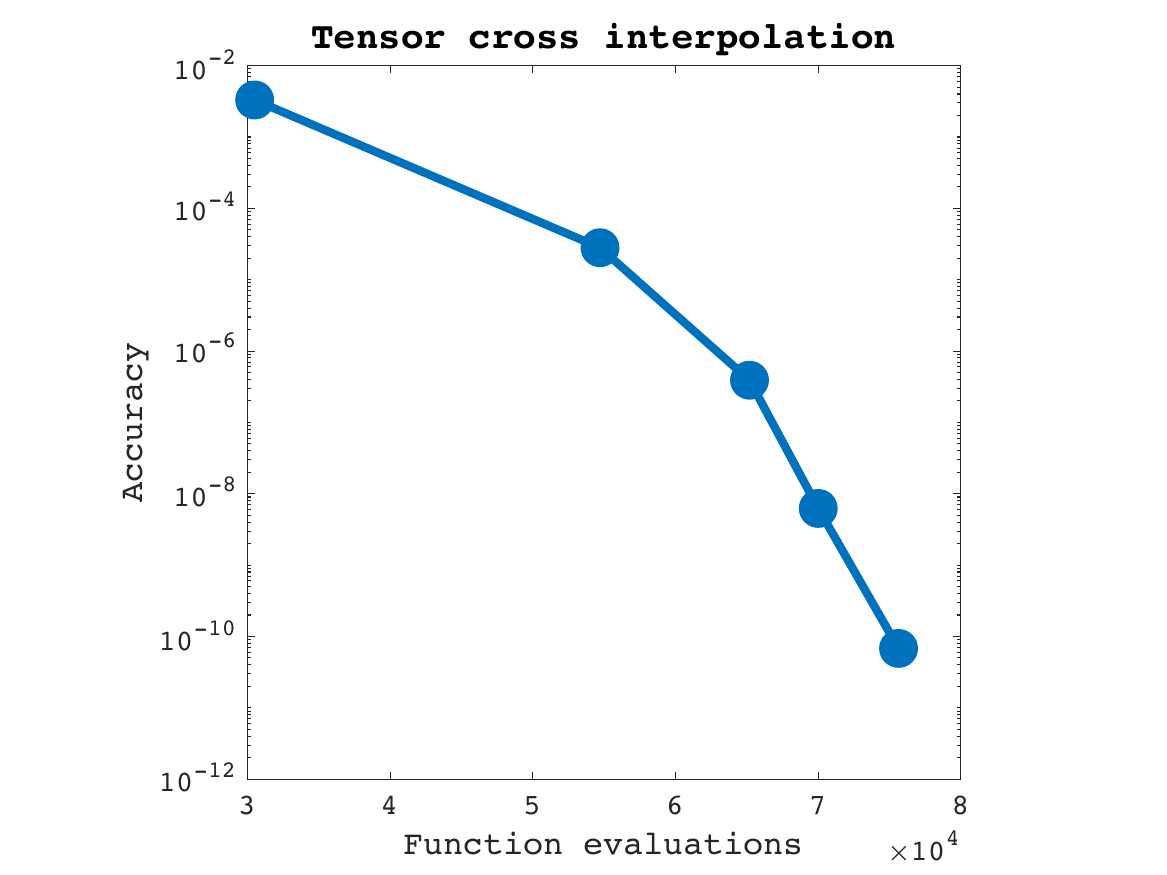}\caption{Error of our basic interpolative construction (left) and tensor cross
interpolation (right) for (\ref{eq:oscil}) with $J=25$, plotted
against the number of function evaluations. Error is measured by the
infinity norm on the dyadic grid $\mathcal{D}_{20}$. Note the different
scales of the horizontal axes.}
\label{fig:oscillatory}
\end{figure}

When $J$ becomes larger than about $100$, as the function becomes
highly oscillatory, tensor cross interpolation (TCI) fails to converge,
while our interpolative construction remains stable. We show in Figure
\ref{fig:oscillatory-1} that our accuracy remains roughly fixed if
we scale $N=\Omega(J)$.

\begin{figure}
\vspace{5mm}
\centering{}%
\begin{tabular}{|c|c|c|c|c|c|c|c|}
\hline 
$J$ & 200 & 300 & 400 & 500 & 600 & 1000 & 2000\tabularnewline
\hline 
\hline 
Error & {\scriptsize{}$9.8\times10^{-11}$} & {\scriptsize{}$1.1\times10^{-10}$} & {\scriptsize{}$8.4\times10^{-11}$} & {\scriptsize{}$1.3\times10^{-10}$} & {\scriptsize{}$1.8\times10^{-10}$} & {\scriptsize{}$2.2\times10^{-10}$} & {\scriptsize{}$3.5\times10^{-10}$}\tabularnewline
\hline 
\end{tabular}\caption{Error of our approach for (\ref{eq:oscil}) as a function of $J$,
where we take $N=2J$. Error is measured by the infinity norm on the
dyadic grid $\mathcal{D}_{20}$. Note that tensor cross interpolation
fails to converge for these examples.}
\label{fig:oscillatory-1}
\end{figure}

\subsection{Sparse interpolation}

For $\alpha>0$, consider the function 
\begin{equation}
f(x)=\frac{\alpha}{\sqrt{\alpha^{2}+(x-1/2)^{2}}},\label{eq:alpha}
\end{equation}
 which is peaked at $x=1/2$. The peak becomes increasingly sharp
as $\alpha\ra0$. However (cf. Section \ref{sec:sharp}), the TT ranks
of $f$ remain roughly constant in this limit.

We will demonstrate that the sparse and rank-revealing interpolation
scheme of Section \ref{sec:sparse} can be applied effectively to
this function in the large $N$ limit. (Of course, the approach of
Section \ref{sec:sharp} could well be applied here given \emph{a
priori} knowledge about the peak location.)

In order to maintain fixed accuracy, we must scale $N$ with $\alpha$
as $N\sim C/\sqrt{\alpha}$. Alternatively, we can consider $\alpha$
as a function of $N$, for several different fixed values of $C$.
In Figure \ref{fig:alpha} we plot the error of our sparse interpolation
scheme. Given the necessary function evaluations, each construction
in the figure was completed in less than 0.5 seconds on a 2021 M1
MacBook Pro, which would not be possible of $\Omega(N^{3})$ or even
$\Omega(N^{2})$ operations were required.

\begin{figure}
\centering{}$\quad\quad\quad\quad$\includegraphics[scale=0.5]{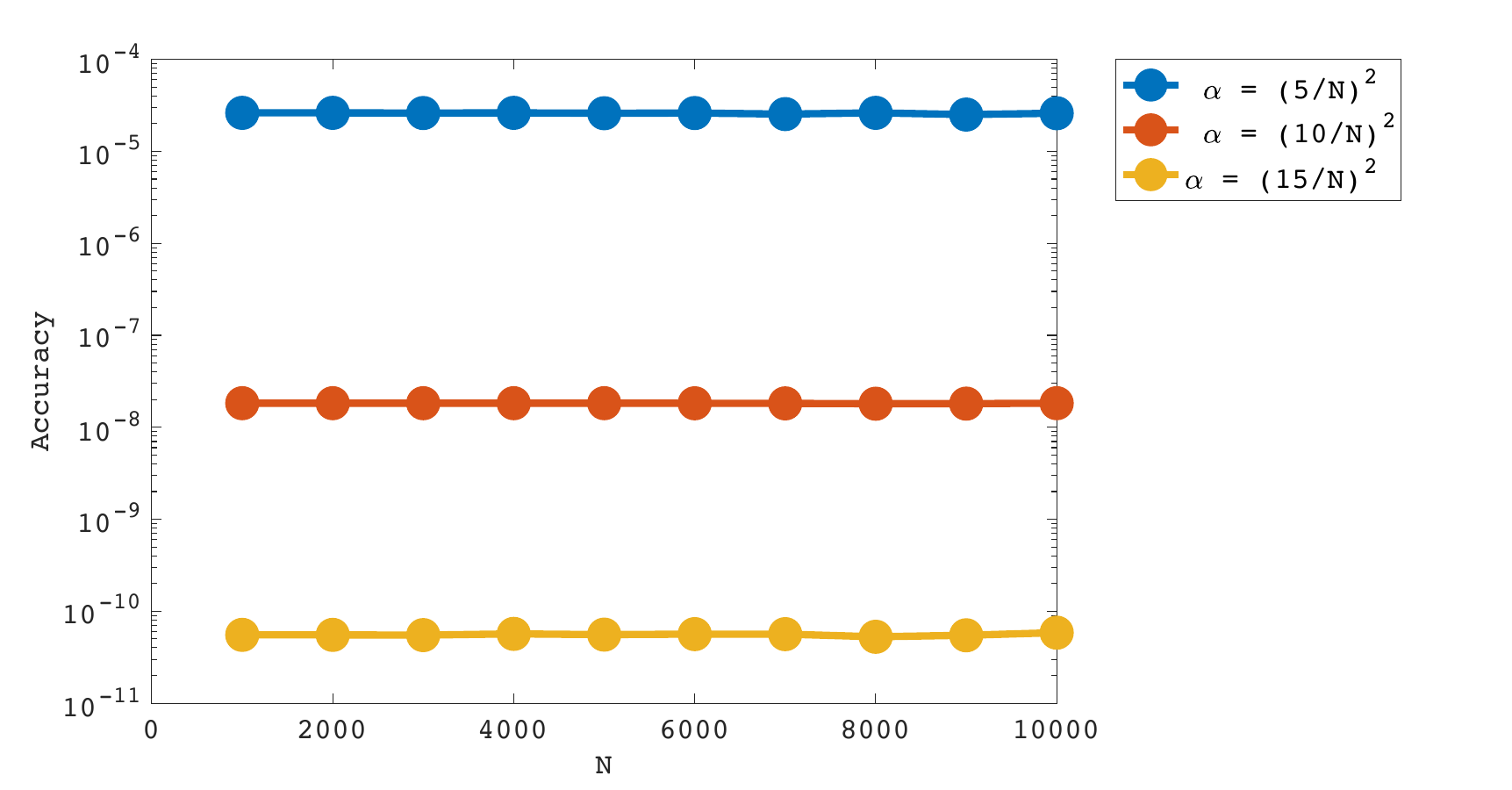}\caption{Error of our approach for (\ref{eq:alpha}) where we scale $\alpha$
with $N$ according to the legend. Error is measured by the infinity
norm on the dyadic grid $\mathcal{D}_{25}$.}
\label{fig:alpha}
\end{figure}

\subsection{Inverting the construction}

Consider (\ref{eq:alpha}) once again where we now fix the value $\alpha=0.1$.
We will use this example to validate the `inversion' algorithm of
Section \ref{sec:invert}. We choose $N=300$, more than large enough
to ensure machine precision of our QTT.

For the inverse construction, we simply take $q=1$, so the local
Lagrange interpolation is linear, which is accurate up to error $O(h^{2})$
on an interval of size $O(h)$. Therefore we expect that as the depth
$K$ is increased, the accuracy of our inversion algorithm should
scale as $(2^{-K})^{2}=2^{-2K}$. To measure this accuracy, we use
the algorithm of Section \ref{sec:invert} to recover the evaluations
$f(c^{\alpha}/2)$, $\alpha=0,\ldots,N$, and we record the worst
case error over $\alpha$ of the recovery. The results are plotted
in Figure \ref{fig:zip}, and they validate our scaling prediction.

\begin{figure}
\centering{}$\quad\quad$$\quad\quad\quad\quad$\includegraphics[scale=0.5]{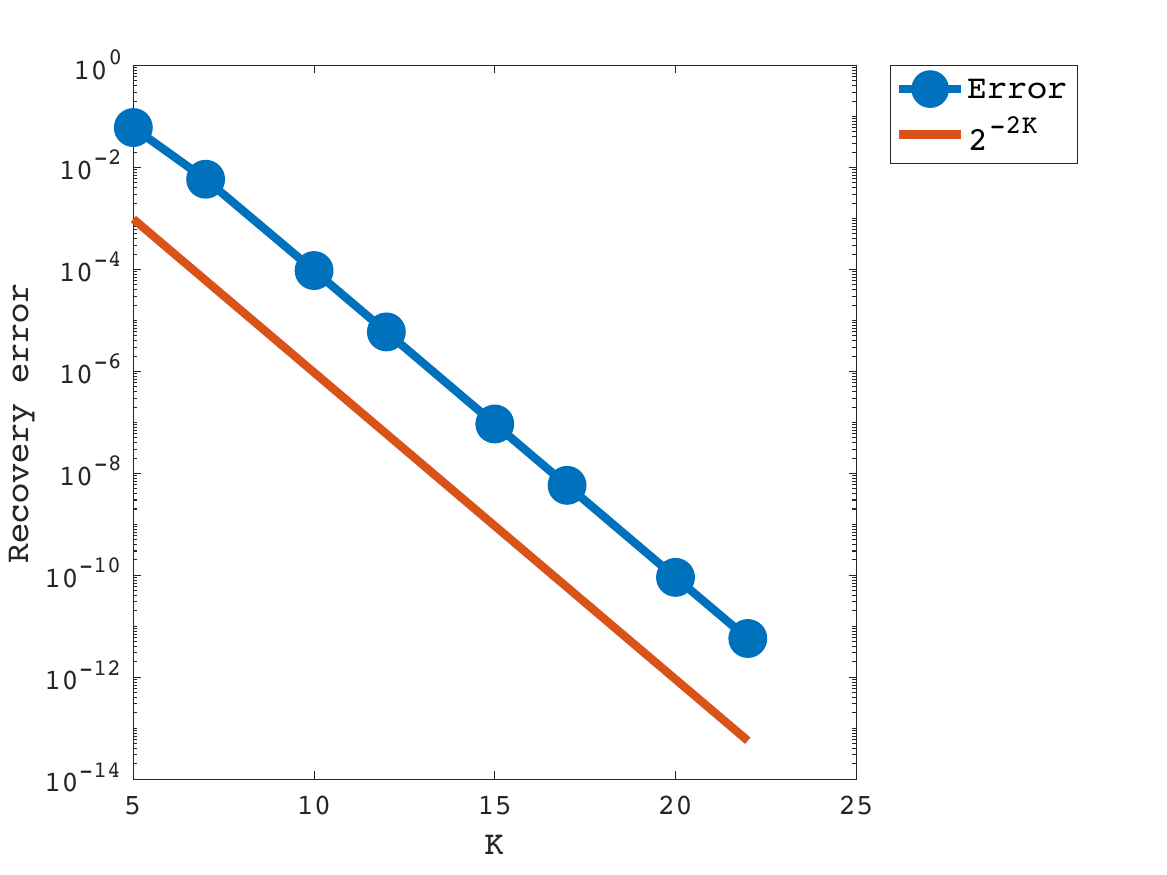}\caption{Worst-case error of the recovery of $f(c^{\alpha}/2)$ from a QTT
for $f$ (following the algorithm of Section \ref{sec:invert}), plotted
against the depth $K$.}
\label{fig:zip}
\end{figure}

\subsection{Multiresolution construction}

Next consider the Gaussian function 
\begin{equation}
f(x)=e^{-\frac{1}{2}(x/\alpha)^{2}}.\label{eq:gaussian}
\end{equation}
In \cite{doi:10.1137/120864210} a bound for the QTT ranks of $f$
was provided based on approximation with Fourier series, but this
rank bound is not uniform with respect to the scale $\alpha$ of the
Gaussian function. In particular, in the limit $\alpha\ra0$, the
quantitative smoothness of $f$ deteriorates. Nevertheless, it may
be observed empirically that the QTT ranks of $f$ are bounded independently
of $\alpha$. Our multiresolution construction of Section \ref{sec:sharp}
clarifies this phenomenon, and here we validate this perspective with
numerics.

Fix $K=25$, and adopt the subintervals considered above in our discussion
of the function $f(x)=\sqrt{x}$ (cf. Section \ref{sec:sharp}), i.e.,
take $q_{k}=1$ and $\sigma_{1:k}^{1}=(0,0,\ldots,0)$ for all $k=1,\ldots,K-1$.
In Figure \ref{fig:gaussian}, we plot the error of the multiresolution
construction of Section \ref{sec:sharp} as a function of $\alpha$
for several values of $N$. Observe that uniformly bounded error is
achieved in the limit $\alpha\ra0$, and in fact machine precision
across all $\alpha$ is already almost attained by $N=18$.

\begin{figure}
\centering{}\includegraphics[scale=0.5]{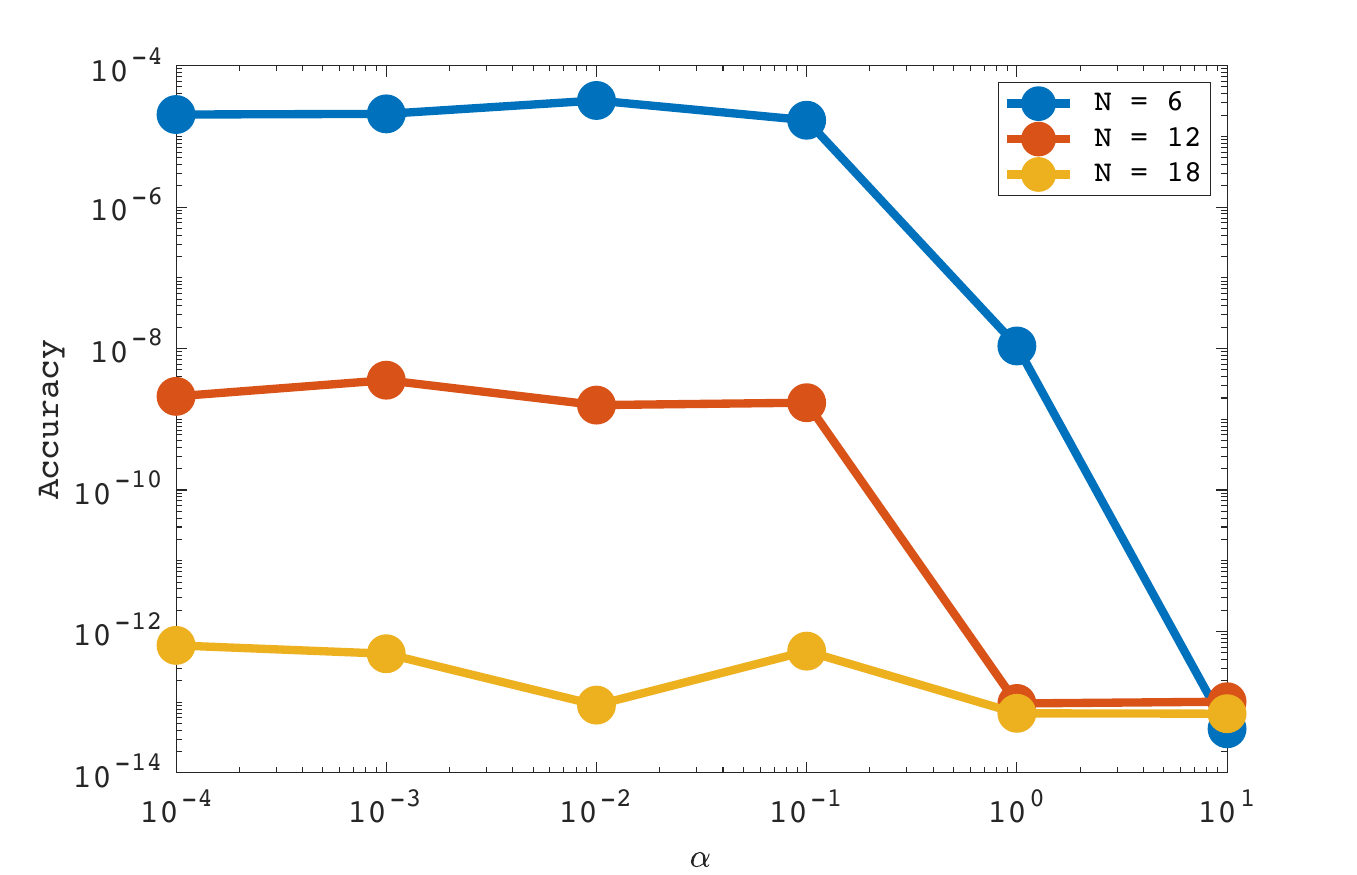}\caption{Error of the multiresolution construction for (\ref{eq:gaussian})
as a function of $\alpha$, for several values of $N$ indicated by
the legend. Error is measured by the infinity norm on the dyadic grid
$\mathcal{D}_{25}$.}
\label{fig:gaussian}
\end{figure}

\subsection{Multivariate construction}

Finally we simply demonstrate the application of the multivariate
construction in the serial ordering (cf. Section (\ref{sec:serial}))
to the bivariate function 
\begin{equation}
f(x,y)=\frac{1}{1+100\left[(x-1/2)^{2}+(y-1/2)^{2}\right]}.\label{eq:bivariate}
\end{equation}
 We fix $K=10$ and plot the error as a function of $N$ in Figure
\ref{fig:multi}.

\begin{figure}
\centering{}\includegraphics[scale=0.5]{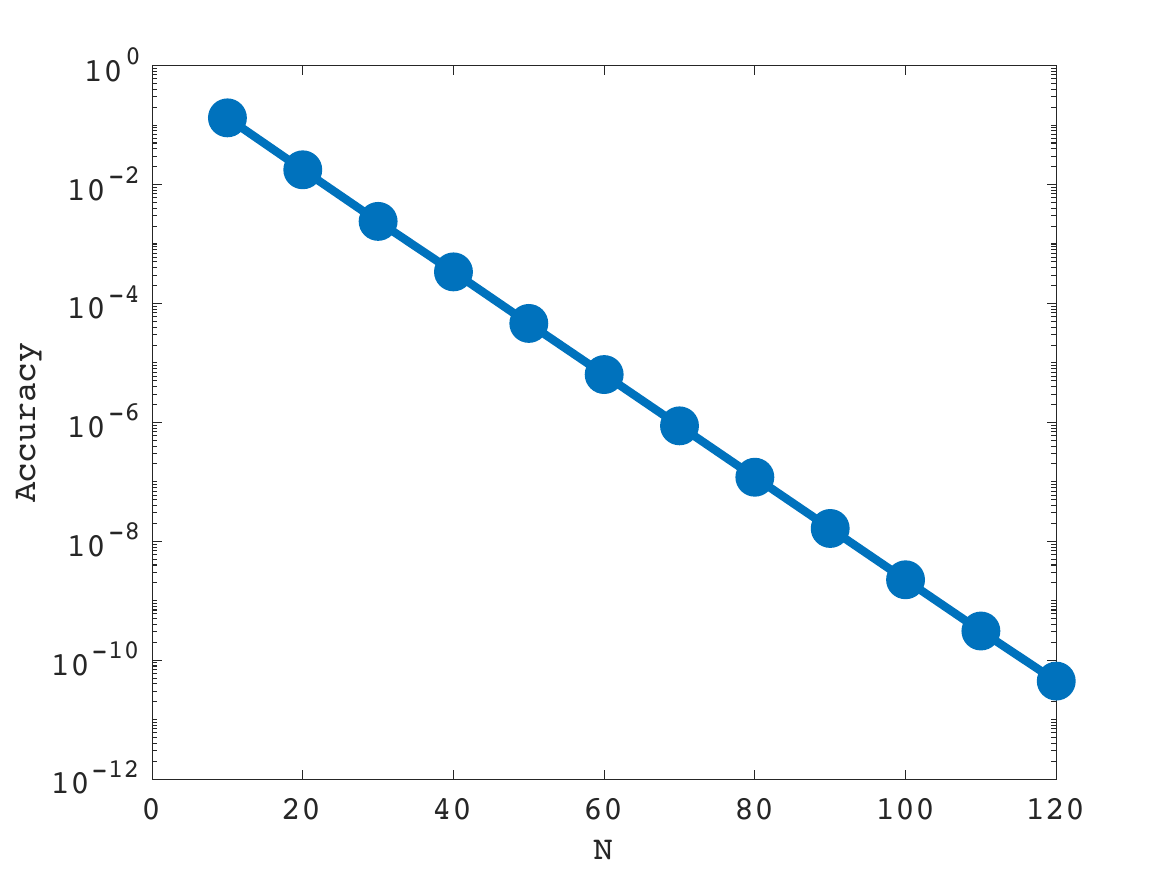}\caption{Error of the multivariate construction in the serial ordering (cf.
Section (\ref{sec:serial})) for the function (\ref{eq:bivariate})
as a function of $N$. Error is measured by the infinity norm on the
dyadic grid $\mathcal{D}_{10}^{2}$.}
\label{fig:multi}
\end{figure}

\bibliographystyle{plain}
\bibliography{bigbib}

\appendix

\part*{\protect\pagebreak}

\section*{Appendix A $\ \ $Proofs of interpolation bounds \label{app:interp}}

In this section we prove the interpolation bounds of Section \ref{sec:interp}.
\begin{proof}
[Proof of Proposition \ref{prop:diffinterp}.] The bound on $E_{m,N}[f]$
follows directly from the interpolation error bound of Theorem 7.2
of \cite{TrefethenBook2019}, which bounds the pointwise interpolation
error of a function $g:[-1,1]\ra\R$ (using a suitable Chebyshev-Lobatto
grid) by 
\[
\frac{4V}{\pi p(N-p)^{p}},
\]
 where $V$ is the total variation of $g^{(p)}$.

The only wrinkle is that we need to shift and scale the functions
that we want to interpolate to the reference interval $[-1,1]$. At
level $m$, for any fixed $u\in[0,1-2^{-m}]$, we want to interpolate
the function $g_{m,u}(x):=f\left(u+2^{-m}\left[\frac{x+1}{2}\right]\right)$
defined on the domain $x\in[-1,1]$. Since $g_{m,u}^{(p)}$ is differentiable
by assumption, the total variation of $g_{m,u}^{(p)}$ is equal to
\[
\Vert g_{m,u}^{(p+1)}\Vert_{L^{1}([-1,1])}\leq2\Vert g_{m,u}^{(p+1)}\Vert_{L^{\infty}([-1,1])}.
\]
 But $g_{m,u}^{(p+1)}(x)=2^{-(m+1)}f^{(p+1)}\left(u+2^{-m}\left[\frac{x+1}{2}\right]\right)$,
so 
\[
\Vert g_{m,u}^{(p+1)}\Vert_{L^{\infty}([-1,1])}\leq2^{-(m+1)}\Vert f^{(p+1)}\Vert_{L^{\infty}([0,1])}\leq2^{-(m+1)}C,
\]
 and it follows that the total variation of $g_{m,u}^{(p)}$ is bounded
by $2^{-m}C$. This concludes the proof.
\end{proof}
$ $
\begin{proof}
[Proof of Proposition \ref{prop:analyticinterp}.] The bound on $E_{m,N}[f]$
follows directly from the interpolation error bound of Theorem 8.2
of \cite{TrefethenBook2019}, which states that if $g:[-1,1]\ra\R$
extends analytically to $\mathcal{E}_{\rho}$ and $\vert g\vert\leq M$
on $\mathcal{E}_{\rho}$, then the pointwise interpolation error for
$g$ using Chebyshev-Lobatto grid of size $N$ is bounded by 
\[
\frac{4M\rho^{-N}}{\rho-1}.
\]

At level $m$, for any fixed $u\in[0,1-2^{-m}]$, we want to interpolate
the function $g_{m,u}(x):=f\left(u+2^{-m}\left[\frac{x+1}{2}\right]\right)$
defined on the domain $x\in[-1,1]$. Hence we are motivated to consider
the question: for which values of $\rho'>1$ does the containment
\[
u+2^{-m}\left[\frac{\mathcal{E}_{\rho'}+1}{2}\right]\subset\frac{\mathcal{E}_{\rho}+1}{2}
\]
 hold? Equivalently, we ask for the containment
\[
\left(2u-1+2^{-m}\right)+2^{-m}\mathcal{E}_{\rho'}\subset\mathcal{E}_{\rho}.
\]
 Note that the extreme values for $2u-1+2^{-m}$ (over all allowed
values $u$) are $\pm(1-2^{-m})$. Therefore it will suffice to ask
for 
\[
(1-2^{-m})+2^{-m}\mathcal{E}_{\rho'}\subset\mathcal{E}_{\rho}.
\]
 In turn, for this containment of ellipses to hold it suffices that
the following inequalities hold for their semi-major axes: 
\[
(1-2^{-m})+2^{-m}a_{\rho'}\leq a_{\rho},\quad2^{-m}b_{\rho'}\leq b_{\rho},
\]
 which finally lead to the desiderata 
\[
a_{\rho'}\leq2^{m}\left(a_{\rho}-1\right)+1,\quad b_{\rho'}\leq2^{m}b_{\rho}.
\]

First we observe that these inequalities always hold trivially if
we take $\rho'=\rho$. However, asymptotically as $m\ra\infty$, we
will see that we can achieve far larger $\rho'=\Omega(2^{m})$.

Indeed, observe that for all $r\geq1$, $a_{r}=\frac{r+r^{-1}}{2}$
and $b_{r}=\frac{r-r^{-1}}{2}$ satisfy the inequalities 
\[
\frac{r}{2}\leq a_{r}\leq\frac{r+1}{2},\quad\frac{r-1}{2}\leq b_{r}\leq\frac{r}{2}.
\]
 Therefore it suffices that 
\[
\rho'\leq2^{m+1}\left(a_{\rho}-1\right)+1,\quad\rho'\leq2^{m}(\rho-1).
\]
 In fact, $a_{\rho}-1\leq\frac{\rho-1}{2}$ for $\rho\geq1$, so both
conditions are implied by the stronger inequality 
\[
\rho'\leq2^{m+1}(a_{\rho}-1)=2^{m}\frac{(\rho-1)^{2}}{\rho}.
\]

Therefore if we take 
\[
\rho_{m}:=\max\left[\rho,\,2^{m}\frac{(\rho-1)^{2}}{\rho}\right],
\]
 then $g_{m,u}$ is analytic on $\mathcal{E}_{\rho_{m}}$, and moreover
$\vert g_{m,u}\vert$ is bounded by $B$ on $\mathcal{E}_{\rho_{m}}$.
The result then follows from Theorem 8.2 of \cite{TrefethenBook2019}.
\end{proof}
$ $
\begin{proof}
[Proof of Proposition \ref{prop:bandlimitedinterp}.] At level $m$,
for any fixed $u\in[0,1-2^{-m}]$, we want to interpolate the function
$g_{m,u}(x):=f\left(u+2^{-m}\left[\frac{x+1}{2}\right]\right)$ defined
on the domain $x\in[-1,1]$. As in the proof of Proposition \ref{prop:analyticinterp},
we will make use of Theorem 8.2 of \cite{TrefethenBook2019}, which
bounds the error of Chebyshev interpolation for analytic functions.

Now $f$ extends analytically to the entire complex plane via the
formula $f(z)=\frac{1}{2\pi}\int e^{i\omega x}\,d\mu(\omega)$ for
$z\in\mathbb{C}$. Thus $g_{m,u}$ extends as 
\[
g_{m,u}(z)=\frac{e^{i\omega\left(u+2^{-(m+1)}\right)}}{2\pi}\int e^{i2^{-(m+1)}\omega z}\,d\mu(\omega).
\]
 Since $\mu$ is supported on $[-\Omega,\Omega]$, we have that $\vert e^{i2^{-(m+1)}\omega z}\vert\leq e^{2^{-(m+1)}\Omega\,\Im(z)}$
on the support of $\mu$, and therefore
\[
\vert g_{m,u}(z)\vert\leq\frac{\vert\mu\vert}{2\pi}\,e^{2^{-(m+1)}\Omega\,\Im(z)}.
\]
 Now for any $z$ in the Bernstein ellipse $\mathcal{E}_{\rho}$,
we have $\mathrm{Im}(z)\leq\frac{\rho-\rho^{-1}}{2}\leq\frac{\rho}{2}$.
Therefore 
\[
\vert g_{m,u}(z)\vert\leq M:=\frac{\vert\mu\vert}{2\pi}\,e^{2^{-(m+2)}\Omega\rho}
\]
 for all $z\in\mathcal{E}_{\rho}$.

Then Theorem 8.2 of \cite{TrefethenBook2019} bounds the interpolation
error for $g_{m,u}$ by 
\[
\frac{4M\rho^{-N}}{\rho-1}\leq\frac{2\vert\mu\vert}{\pi}\,\frac{\exp\left[2^{-(m+2)}\Omega\rho-N\log\rho\right]}{\rho-1},
\]
 where $\rho>1$ is arbitrary. Simply take $\rho=2$, yielding 
\[
E_{m,N}[f]\leq\frac{2\vert\mu\vert}{\pi}\exp\left[2^{-(m+1)}\Omega-N\log2\right].
\]
 Since $\log2\geq\frac{1}{2}$, we have 
\[
E_{m,N}[f]\leq\frac{2\vert\mu\vert}{\pi}\exp\left[\frac{1}{2}\left(2^{-m}\Omega-N\right)\right],
\]
 as was to be shown.
\end{proof}

\end{document}